\newtheorem{thm}{Theorem}[section]
\newtheorem{cor}[thm]{Corollary}
\newtheorem{lemma}[thm]{Lemma}
\newtheorem{prop}[thm]{Proposition}
\theoremstyle{definition}
\newtheorem{defn}[thm]{Definition}
\newtheorem{example}[thm]{Example}
\newtheorem{rmk}[thm]{Remark}
\newcommand{\N}{\mathbb{N}}
\newcommand{\R}{\mathbb{R}}
\newcommand{\cS}{\mathcal{S}}
\renewcommand{\geq}{\geqslant}
\renewcommand{\leq}{\leqslant}
\theoremstyle{remark}
\DeclareMathOperator{\conv}{conv}
\DeclareMathOperator{\PLC}{PLC}
\DeclareMathOperator{\GL}{GL}
\DeclareMathOperator{\rad}{Rad}
\DeclareMathOperator{\In}{Int}
\DeclareMathOperator{\stab}{Stab}
\renewcommand{\|}{\mid}
\renewcommand{\>}{\rangle}
\newcommand{\x}{\boldsymbol{x}}
\newcommand{\y}{\boldsymbol{y}}
\newcommand{\df}[1]{\textit{\textbf{#1}}}
\renewcommand{\phi}{\varphi}
\newcommand{\Kdavis}{\ensuremath{K_\Sigma}}
\newcommand{\fc}{\ensuremath{K}}
\newcommand{\Klin}{\ensuremath{\widehat{\mathcal{K}}}}
\newcommand{\Zlin}{\ensuremath{{Z}}}
\title{Mapping the Davis complex into the imaginary cone}
\author{Xiang Fu}
\address{Beijing International Center for Mathematical Research, Peking University,  P.R.China}
\email{fuxiang@math.pku.edu.cn}
\author{Lawrence Reeves}
\address{School of Mathematics and Statistics, University~of~Melbourne, Australia}
\email{lreeves@unimelb.edu.au}
\begin{document}
\maketitle

\iffalse
\textcolor{red}{To do: 
\begin{itemize}
\item Is imaginary cone $Z$ defined ?
\item Check/fix notation for $K$ vs $K_\Sigma$
\end{itemize}
}

\textcolor{blue}{Have done: 
\begin{itemize}
\item Imaginary cone $Z$ defined.
\item Check/fix notation for $K$ vs $K_\Sigma$ (used $\mathcal{K}$ for the fundamental chamber in the imaginary cone)
\item Added a section on normalization
\item Added a few lemmas
\item Completed a quick proof of the linear independence of the set $A=\{r_a\cdot v_0-v_0\}$.
\end{itemize}
}

\fi

\begin{abstract}
The study of the set of limit roots associated to an infinite Coxeter group was initiated by Hohlweg, Labb\'{e} and Ripoll in \cite{HLR11} and further developed by Dyer, Hohlweg, P\'eaux and Ripoll in 
\cite{DHR13} and \cite{limit3}. 
The Davis complex associated to a finitely generated Coxeter group $W$ is a  piecewise Euclidean CAT(0) space on which $W$ acts properly, cocompactly by isometries. A good reference is the book by Davis \cite{Davis}. 
%The one skeleton of the Davis complex can be identified with the Cayley graph of $W$.
In this paper we define a natural map from the Davis complex into the normalised imaginary cone of a based root system.

\end{abstract}

\section{Introduction}

Let $(W, S)$ be a Coxeter system. That is $W$ is an abstract group generated by a set $S$ consisting of involutions, subject only to braid relations on elements of $S$.
To study the  Coxeter group $W$, a standard approach is to 
%construct an isomorphic copy
%of $W$ which is generated by a collection of linear reflections with respect to some hyperplane in some suitable vector space $V$, and $W$ is studied via this isomorphic copy. 
define a representation of $W$ in which the generators act by linear reflections on a vector space $V$.
Such a reflection $\rho$ is typically of the form %that for any $x\in V$
$$
\rho(x)=x-2\frac{(x, a)}{(a, a)}a,
$$     
where $a\in V$ is a representative normal vector to the reflecting hyperplane, and $(\,,\,)$ is some symmetric bilinear form on $V$.
It is a beautiful property of  Coxeter groups  that each element conjugate to an element in $S$ corresponds to one such reflection, and any collection of reflections in $W$ again generate a Coxeter group.  
In the case that $W$ is a finite Coxeter group, it is well known that $(\,,\,)$ is positive definite on the space $V$. 
In the case that $W$ is an infinite Coxeter group, the behaviour of $(\,,\,)$ is less restricted. 
The space $V$ can be partitioned into three regions (some possibly trivial in the case that $W$ is finite): 
\begin{align*}
V^+:&=\{\, v\in V\mid (v, v)>0\,\};\\
V^0:&=\{\,v\in V\mid (v, v)=0 \,\};\\
\noalign{\hbox{and}}\\
V^-:&=\{\,v\in V\mid (v, v)<0\,\}.
\end{align*} 

In either case that $W$ is finite or infinite, the collection of representative normal vectors corresponding to reflections in $W$, known as roots, lie in the region $V^+$.

It is well known that the set of all roots is discrete in $V$, however in the case that $W$ is infinite and $S$ is finite, we can always find suitable hyperplanes $H$ in $V$ such that the projections of all the roots onto $H$ lie in a compact set. 
Hence an infinite sequence of such projected roots (known as \emph{normalised roots}) will possess accumulation points.
It turns out that the set $E$ of all the accumulation points of normalised root lies within $V^0$.
The study of $E$ is of particular significance in understanding the distribution of roots in an arbitrary infinite Coxeter group $W$, 
which is a fundamental  question for general Coxeter group theory yet to be properly investigated.
 
The notion of an \emph{imaginary cone} first appeared in the context of Kac-Moody Lie algebras: it is the cone pointed at the origin and spanned by the positive imaginary roots of the associated Weyl group. This ideal was subsequently generalised into the context of Coxeter groups (see, for example, \cite{Dyer12} and \cite{Fu13}). 
In the Coxeter group setting the imaginary cone $\mathcal{Z}$ lies in the region $V^-$. Furthermore, in \cite{DHR13} it was shown that $\mathcal{Z}$ has an intimate connection with the limit set $E$:
$$
\conv(E) = \overline{Z},
$$
where $Z$ is the projection of $\mathcal{Z}$ onto the hyperplane $H$ and $\overline{Z}$ denotes the topological closure of $Z$.
Therefore, to better understand the set $E$, further investigation into the imaginary cone may well prove to be of value. 
In the literature, not much topological properties of the imaginary cone were known. 
In this paper, by constructing an explicit map from the Davis complex into the normalised imaginary cone $Z$ (which is a $W$-equivariant homeomorphism onto its image), we hope to build a bridge through which known topological properties in the Davis complex may be observed and studied in the imaginary cone.

\section{Based root systems for infinite Coxeter groups}

Let $(W, S)$ be a Coxeter system with parameters $\{\,m_{st}\in \N_{\geq 1}\cup\{\infty\} \mid s, t\in S\,\}$, in the sense of \cite{NB}. That is, $W$ is an abstract group generated by elements of $S$ subject to the relations: 
\begin{itemize}
\item[(1)] $1\notin S$, and $r^2=1$ for all $r\in S$.
\item[(2)] $(st)^{m_{st}}=1$, $\forall s,t\in S$ (in case of $m_{st}=\infty$ then there is no relation between $r$ and $s$); and
$m_{st}=1$ if and only if $s=t$.
\end{itemize}
If $(W, S)$ is a Coxeter system, then the group $W$ is called a Coxeter group. Thus a Coxeter group is an abstract group generated by 
a set of involutions, subject to a set of braid relations specifying the order of products of pairs of generators. These braid relations are governed by the parameters $\{\,m_{st}\mid s, t\in S\,\}$. Often, we study Coxeter groups via their so-called geometric realizations. That is, for a given Coxeter group $W$, we construct a reflection group (a group generated by reflections) isomorphic to 
$W$, and we study $W$ via this isomorphic copy. A key object in constructing such reflection groups is the notion of \emph{root systems}. To construct a root system, one needs the notion of a \emph{root basis} first.

\begin{defn}\textup{(Krammer \cite{K})}
\label{def:datum}
Let $(W, S)$ be a Coxeter system as above.
Suppose that $V$ is a vector space over $\R$ and let $(\,,\,)$ be a bilinear
form on $V$, and let $\Pi=\{\alpha_s\mid s\in S\}$ be a subset of $V$ whose elements are 
in bijective correspondence with $S$. Then $\Pi$ is called a \df{root
basis} if the following conditions are satisfied:
\begin{itemize}
 \item [(C1)] $(\alpha_s, \alpha_s)=1$ for all $s\in S$, and if $s, t$ are distinct elements
of $S$ then either $(\alpha_s, \alpha_t )=-\cos(\pi/m_{st})$ if
$m_{st}$ if finite, or else $(\alpha_s, \alpha_t) \leq -1$ if
$m_{st}=\infty$);
 \item [(C2)] $0\notin \PLC(\Pi)$, where for any set $A$, $\PLC(A)$ denotes the
set
 $$\{\,\sum\limits_{a\in A} \lambda_a a\mid \text{$\lambda_a \geq 0$ for all
$a\in A$ and $\lambda_{a'}>0$ for some $a'\in A$}\,\}.$$ 
\end{itemize}
\end{defn}

If $\Pi$ is a root basis, then we call the triple $\mathscr{C}=(\,V, \, \Pi,
\,(\,,\,)\,)$ a \df{Coxeter datum}. Throughout this section we fix an  arbitrarily chosen
Coxeter datum $\mathscr{C}$. Observe that (C1) implies that for each $a\in \Pi$,
$a\notin \PLC(\Pi\setminus\{a\})$. Furthermore, (C1) together with (C2) yield
that whenever $a, b\in \Pi$ are distinct then $\{a, b\}$ is linearly
independent. 

For each non-isotropic $x\in V$ (that is, $(x, x)\neq 0$) define $\rho_x \in \GL(V)$, the \emph{reflection corresponding to $x$} by the rule: 
$$\rho_x v=v-2\frac{(x, v)}{(x, x)}x,$$ for all $v\in V$. Note that $\rho_x$ is an involution and $\rho_x
x=-x$. The following proposition summarizes a few useful results:
 
\begin{prop}\textup{\cite[Lecture 1]{RB96}}
 \label{pp:anu1}
\rm{(i)}\quad Let $r, s\in S$ be distinct with $m_{rs}\neq
\infty$. Set $\theta =\pi/m_{rs}$. Then for each integer $i$,
$$(\rho_{\alpha_r} \rho_{\alpha_s})^i \alpha_r=\frac{\sin(2i+1)\theta}{\sin \theta}\alpha_r+\frac{\sin
2i\theta}{\sin\theta}\alpha_s, $$
and in particular, $\rho_{\alpha_r} \rho_{\alpha_s}$ has order $m_{rs}$.

\noindent\rm{(ii)}\quad Let $r, s\in S$ be distinct with
$m_{rs}=\infty$. Set $\theta =\cosh^{-1}(-(\alpha_r, \alpha_s))$. Then for each integer $i$,
\begin{equation*}
(\rho_{\alpha_r} \rho_{\alpha_s})^i \alpha_r=
\begin{cases}
 \frac{\sinh(2i+1)\theta}{\sinh \theta}\alpha_r+\frac{\sinh
2i\theta}{\sinh\theta}\alpha_s, \text{ if $(\alpha_r, \alpha_s) \neq -1$}\\
(2i+1)\alpha_r+2i \alpha_s, \text{ }\text{ }\text{ }\text{ }\text{ }\text{ }\text{ }\text{ if $(\alpha_r, \alpha_s)=-1$}, 
\end{cases}
\end{equation*}
and in particular, $\rho_{\alpha_r} \rho_{\alpha_s}$ has infinite order.
\qed
\end{prop}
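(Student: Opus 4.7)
The plan is to restrict attention to the two-dimensional subspace $U := \vspan\{\alpha_r,\alpha_s\}$, whose two-dimensionality is guaranteed by the remark following Definition~\ref{def:datum}. Both $\rho_{\alpha_r}$ and $\rho_{\alpha_s}$ preserve $U$, since the reflection formula only adjusts a vector by a scalar multiple of $\alpha_r$ or $\alpha_s$; so the entire computation reduces to linear algebra in $U$, governed by the Gram matrix whose diagonal entries are $1$ and whose off-diagonal entry is $c := (\alpha_r,\alpha_s)$.

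I would prove the displayed formulas by induction on $i$; the cases $i<0$ then follow from $(\rho_{\alpha_r}\rho_{\alpha_s})^{-1} = \rho_{\alpha_s}\rho_{\alpha_r}$ combined with the oddness of $\sin$ and $\sinh$. The base case $i=0$ is immediate since the coefficient of $\alpha_s$ vanishes. For the inductive step, writing the conjectured vector as $A_i\alpha_r + B_i\alpha_s$, I apply $\rho_{\alpha_s}$ and then $\rho_{\alpha_r}$ using the explicit reflection formula and read off the new pair $(A_{i+1}, B_{i+1})$. In case~(i), with $c = -\cos\theta$, matching coefficients amounts to the three-term identity $\sin(n+1)\theta + \sin(n-1)\theta = 2\cos\theta\sin n\theta$ applied at $n = 2i+1$ and $n = 2i+2$; case~(ii) with $c \neq -1$ is the exact hyperbolic analogue, using $\sinh(n+1)\theta + \sinh(n-1)\theta = 2\cosh\theta\sinh n\theta$. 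The degenerate sub-case $c = -1$ collapses to the linear recursions $A_{i+1} = A_i + 2$ and $B_{i+1} = B_i + 2$, and induction gives $(2i+1)\alpha_r + 2i\alpha_s$ directly.

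For the order claims, in case~(i) the formula gives $(\rho_{\alpha_r}\rho_{\alpha_s})^{m_{rs}}\alpha_r = \alpha_r$ (since $\sin 2m_{rs}\theta = 0$), and the parallel calculation with $r$ and $s$ swapped yields $(\rho_{\alpha_r}\rho_{\alpha_s})^{m_{rs}}\alpha_s = \alpha_s$; because the form is non-degenerate on $U$ (determinant $\sin^2\theta > 0$), we have $V = U \oplus U^\perp$, and $\rho_{\alpha_r}\rho_{\alpha_s}$ fixes $U^\perp$ pointwise, so $(\rho_{\alpha_r}\rho_{\alpha_s})^{m_{rs}}$ is the identity on all of $V$; minimality of the exponent follows from $\sin 2i\theta \neq 0$ for $0 < i < m_{rs}$, which makes the intermediate orbit vectors distinct from $\alpha_r$. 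In case~(ii) the $\alpha_s$-coefficient (either $\sinh 2i\theta/\sinh\theta$ or $2i$) is nonzero for every $i \neq 0$, so the iterates $(\rho_{\alpha_r}\rho_{\alpha_s})^i\alpha_r$ are pairwise distinct, forcing infinite order. I expect the only mildly technical point to be matching the trigonometric or hyperbolic recurrence to the reflection computation in the inductive step; everything else is bookkeeping.
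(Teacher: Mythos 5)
First, a point of comparison: the paper gives no proof of Proposition~\ref{pp:anu1} at all --- it is quoted from Howlett's ANU lecture notes and stated with a reference and a QED box --- so there is no in-paper argument to measure yours against. Your proposal is the standard one: restrict to the invariant plane $U=\vspan\{\alpha_r,\alpha_s\}$ (two-dimensional by the remark after Definition~\ref{def:datum}), induct on $i$, and match coefficients via the three-term identities for $\sin$ and $\sinh$, with the parabolic sub-case $(\alpha_r,\alpha_s)=-1$ handled by a linear recursion. The inductive step checks out: writing $(\rho_{\alpha_r}\rho_{\alpha_s})^{i}\alpha_r=A_i\alpha_r+B_i\alpha_s$ and applying $\rho_{\alpha_s}$ then $\rho_{\alpha_r}$ gives $B_{i+1}=2\cos\theta\,A_i-B_i$ and $A_{i+1}=-A_i+2\cos\theta\,B_{i+1}$, which the identity $\sin(n+1)\theta+\sin(n-1)\theta=2\cos\theta\sin n\theta$ (at $n=2i+1$ and $n=2i+2$) converts into the claimed closed forms; likewise in the hyperbolic case. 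The splitting $V=U\oplus U^{\perp}$ from non-degeneracy of the form on $U$, and the infinite-order argument in case (ii), are also fine.

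The one step that fails as written is your minimality argument in case (i). You claim $\sin 2i\theta\neq 0$ for all $0<i<m_{rs}$; this is false whenever $m_{rs}$ is even, since at $i=m_{rs}/2$ one has $2i\theta=\pi$ (already for $m_{rs}=2$, $i=1$, where the two reflections commute and $\rho_{\alpha_r}\rho_{\alpha_s}\alpha_r=-\alpha_r$). At that index the $\alpha_s$-coefficient vanishes, so your criterion ``nonzero $\alpha_s$-component, hence distinct from $\alpha_r$'' does not apply. The conclusion survives because the $\alpha_r$-coefficient there is $\sin(\pi+\theta)/\sin\theta=-1$, so the orbit vector is $-\alpha_r\neq\alpha_r$. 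A clean patch: the order $d$ of $\rho_{\alpha_r}\rho_{\alpha_s}$ divides $m_{rs}$ and must satisfy $\sin 2d\theta=0$, i.e.\ $m_{rs}\mid 2d$, forcing $d\in\{m_{rs}/2,\,m_{rs}\}$; the value $d=m_{rs}/2$ is excluded by the sign of the $\alpha_r$-coefficient. With that repair the proof is complete.
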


Let $G_{\mathscr{C}}=\langle \{\,\rho_a\mid a\in \Pi\,\} \rangle\subseteq \GL(V)$,
the subgroup of $\GL(V)$ generated by the involutions
in the set $\{\,\rho_a\mid a\in \Pi\,\}$.
%Let $(W, R)$ be a Coxeter system  with
%$R=\{\,r_a\mid a\in \Pi\,\}$ (whose elements are in bijective correspondence with elements of $\Pi$),
%subject only to the condition that
%$(r_a r_b)^{m_{ab}}=1$ for all distinct $a, b\in \Pi$ with $m_{ab}\neq \infty$.
Then Proposition~\ref{pp:anu1} yields that there is a group homomorphism
$\phi_{\mathscr{C}}\colon W\to G_{\mathscr{C}}$ satisfying
$\phi_{\mathscr{C}}(r_a)=\rho_a$ for all $a\in \Pi$. This homomorphism together
with the $G_{\mathscr{C}}$-action on $V$ give rise to a $W$-action on $V$: for
each $w\in W$ and $x\in V$, define $wx\in V$ by $wx=\phi_{\mathscr{C}}(w)x$. It
can be easily checked that this $W$-action preserves $(\,,\,)$.
Denote the length function of $W$ with respect to $S$ by $\ell$. Then we have:

\begin{prop}\textup{\cite[Lecture 1]{RB96}}
 \label{pp:anu2}
Let $G_{\mathscr{C}}, W, S$ be as the above and let $w\in W$ and $a\in \Pi$. If
$\ell(wr_a)\geq \ell(w)$ then $wa\in \PLC(\Pi)$.
\qed
\end{prop}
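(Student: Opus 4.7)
The plan is induction on $\ell(w)$. The base case $w = 1$ gives $wa = a \in \Pi \subseteq \PLC(\Pi)$ immediately.

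For the inductive step with $\ell(w) \geq 1$, I first observe that $w$ has at least one right descent $s \in S$ (i.e.\ $\ell(ws) < \ell(w)$), and the hypothesis $\ell(wr_a) \geq \ell(w)$ forces $s \neq r_a$. I then reduce to the standard parabolic (dihedral) subgroup $D = \langle s, r_a \rangle \leq W$. By the well-known coset decomposition for parabolic subgroups in a Coxeter system, $w$ factors uniquely as $w = u \cdot d$ with $d \in D$, $u$ the minimum-length representative of the right coset $wD$, and $\ell(w) = \ell(u) + \ell_D(d)$; the minimality of $u$ is characterised by $\ell(us') > \ell(u)$ for each $s' \in \{s, r_a\}$. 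Since $\ell(ws) < \ell(w)$ we have $d \neq 1$, and a short calculation shows the hypothesis $\ell(wr_a) \geq \ell(w)$ translates into the dihedral condition $\ell_D(d r_a) \geq \ell_D(d)$.

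Inside the dihedral group $D$, Proposition~\ref{pp:anu1} gives explicit formulas for $d\alpha_{r_a}$ as a linear combination of $\alpha_s$ and $\alpha_{r_a}$. In the regime $\ell_D(d r_a) \geq \ell_D(d)$ --- equivalently, the reduced $D$-word of $d$ does not end in $r_a$ --- inspection of the sine or hyperbolic-sine coefficients shows that both are nonnegative, so $da \in \PLC(\{\alpha_s, \alpha_{r_a}\})$. Since $d \neq 1$, we have $\ell(u) < \ell(w)$, and the inductive hypothesis applied to $u$ with each of $\alpha_s$ and $\alpha_{r_a}$ in place of $a$ (both allowed by the minimality condition $\ell(us') > \ell(u)$) yields $u\alpha_s,\, u\alpha_{r_a} \in \PLC(\Pi)$. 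Therefore $wa = u(da) \in \PLC(\Pi)$.

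The main obstacle is the parabolic coset decomposition with length-additivity, a standard but nontrivial consequence of the exchange/deletion condition for Coxeter systems; it is also the reason a naive single-step reduction $w = sw'$ followed by the inductive hypothesis on $w'$ fails, since the resulting expansion of $wa = s(w'a)$ may acquire an apparent negative $\alpha_s$-contribution. Pulling out the full dihedral piece $d$ at once absorbs exactly those awkward cross-terms into the closed-form coefficients of Proposition~\ref{pp:anu1}, after which a tidy invocation of the inductive hypothesis on the strictly shorter element $u$ finishes the argument.
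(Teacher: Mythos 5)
The paper states this proposition without proof, citing \cite[Lecture 1]{RB96}, and your argument is precisely the classical proof found there (and in Bourbaki): induction on $\ell(w)$ via the length-additive coset decomposition $w=ud$ over the standard dihedral parabolic $\langle s,r_a\rangle$, with the sine/hyperbolic-sine formulas of Proposition~\ref{pp:anu1} supplying the nonnegativity of the coefficients of $d\alpha_{r_a}$. Your proof is correct, and your closing remark about why the naive single-step reduction fails correctly identifies the point that makes the dihedral reduction necessary.
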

Immediately from the above proposition we have:

\begin{thm} \textup{(\cite[Lecture 1]{RB96})}
 \label{co:anu2}
The above map
$$\phi_{\mathscr{C}}\colon W\to G_{\mathscr{C}}$$ 
is an isomorphism.
\qed
\end{thm}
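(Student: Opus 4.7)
The plan is to use Proposition~\ref{pp:anu2} to establish injectivity; surjectivity is immediate since the generating set $\{\rho_a\mid a\in\Pi\}$ of $G_{\mathscr{C}}$ lies in the image of $\phi_{\mathscr{C}}$.

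First I would upgrade Proposition~\ref{pp:anu2} to a dichotomy on $wa$. Given $w\in W$ and $a\in\Pi$, the exchange property for the Coxeter system gives $\ell(wr_a) = \ell(w) \pm 1$. In the case $\ell(wr_a)\geq\ell(w)$, Proposition~\ref{pp:anu2} gives $wa\in\PLC(\Pi)$. In the case $\ell(wr_a)<\ell(w)$, set $w'=wr_a$; then $\ell(w'r_a)=\ell(w)>\ell(w')$, so applying Proposition~\ref{pp:anu2} to $w'$ we obtain $w'a\in\PLC(\Pi)$. Since $r_a$ acts as $\rho_a$ sending $a\mapsto -a$, we have $w'a=wr_a a=-wa$, hence $wa\in-\PLC(\Pi)$.

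Now suppose $w\in\ker\phi_{\mathscr{C}}$ and, for contradiction, $w\neq 1$. Choose a reduced expression $w=r_{a_1}r_{a_2}\cdots r_{a_n}$ with $n=\ell(w)\geq 1$, and set $a=a_n\in\Pi$. Then $wr_a=r_{a_1}\cdots r_{a_{n-1}}$ has length at most $n-1<\ell(w)$, so the second case of the dichotomy applies and $wa\in-\PLC(\Pi)$. On the other hand $\phi_{\mathscr{C}}(w)=\mathrm{id}_V$ forces $wa=a$, and $a\in\Pi\subseteq\PLC(\Pi)$. Writing $a=\sum_{s\in S}\lambda_s\alpha_s$ and $-a=\sum_{s\in S}\mu_s\alpha_s$ with all $\lambda_s,\mu_s\geq 0$ and some $\mu_{s'}>0$, addition yields a non-negative combination of $\Pi$ equal to $0$ with a strictly positive coefficient on $\alpha_{s'}$. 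This contradicts condition (C2) in Definition~\ref{def:datum}, so $w=1$.

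The main obstacle is the dichotomy step: one must be careful that the conclusion of Proposition~\ref{pp:anu2} as stated is a one-sided statement, and extracting the negative-cone conclusion when $\ell(wr_a)<\ell(w)$ requires passing to $w'=wr_a$ and using that $\rho_a$ negates $a$. Once this is in hand, the rest is a short reduced-word argument driven directly by the positivity axiom (C2).
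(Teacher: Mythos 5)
Your argument is correct and is exactly the standard derivation that the paper has in mind when it says the theorem follows ``immediately'' from Proposition~\ref{pp:anu2} (the paper itself gives no written proof, only the citation to \cite{RB96}). The dichotomy $wa\in\PLC(\Pi)$ or $wa\in-\PLC(\Pi)$ obtained by applying Proposition~\ref{pp:anu2} to $wr_a$, followed by the reduced-word and (C2) contradiction for a nontrivial kernel element, is precisely the intended route.
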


In particular, the above theorem yields that $(G_{\mathscr{C}}, \{\,\rho_a\mid
a\in \Pi\, \})$ is a Coxeter system isomorphic to $(W, S)$. We call $(W, S)$ the
\emph{abstract Coxeter system} associated to the Coxeter datum $\mathscr{C}$ and
we call $W$ a Coxeter group of \emph{rank} $\#S$, where $\#$ denotes
cardinality. 

\begin{defn}
\label{def:root system}
The \emph{root system} of $W$ in $V$ is the set 
$$\Phi=\{\,wa \mid \text{$w\in W$ and $a\in \Pi$}\,\}.$$
The set $\Phi^+=\Phi\cap \PLC(\Pi)$ is called the set of \emph{positive roots}
and the set $\Phi^-=-\Phi^+$ is called  the set of \emph{negative roots}.
\end{defn}
From Proposition \ref{pp:anu2} and Theorem \ref{co:anu2} we may readily deduce
that:
\begin{prop}\textup{(\cite[Lecture 3]{RB96})}
\label{pp:anu3}
\rm{(i)}\quad Let $w\in W$ and $a\in \Pi$. Then 
 \begin{equation*}
\ell(wr_a) =
\begin{cases}
\ell(w)-1  \text{   if } wa\in \Phi^-\\
\ell(w)+1  \text{   if } wa\in \Phi^+.
\end{cases}
\end{equation*}

\noindent\rm{(ii)}\quad $\Phi=\Phi^+\biguplus\Phi^-$, where $\biguplus$ denotes
disjoint union.

\noindent\rm{(iii)}\quad $W$ is finite if and only if $\Phi$ is finite.
\qed
\end{prop}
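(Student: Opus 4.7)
My plan is to handle the three parts in the order (ii), (i), (iii), since (ii) will be used in (i).

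For (ii), the two halves are proved separately. The disjointness $\Phi^+\cap\Phi^-=\emptyset$ follows from (C2): if $\gamma$ lay in both, then $\gamma,-\gamma\in\PLC(\Pi)$, and adding the two positive combinations would exhibit $0$ as an element of $\PLC(\Pi)$, contradicting (C2). For the exhaustion $\Phi=\Phi^+\cup\Phi^-$, I would apply Proposition~\ref{pp:anu2} twice in contrapositive form: to the pair $(w,a)$ it gives $wa\notin\PLC(\Pi)\Rightarrow\ell(wr_a)<\ell(w)$, and to the pair $(wr_a,a)$ it gives $-wa\notin\PLC(\Pi)\Rightarrow\ell(wr_a)>\ell(w)$. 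If both $wa$ and $-wa$ lay outside $\PLC(\Pi)$, the two conclusions would contradict each other, so every root $wa$ lies in $\PLC(\Pi)\cup(-\PLC(\Pi))=\Phi^+\cup\Phi^-$.

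For (i), given (ii) we now have an exhaustive dichotomy on the sign of $wa$. The case $wa\in\Phi^-$ yields $wa\notin\PLC(\Pi)$ by (ii), so the contrapositive of Proposition~\ref{pp:anu2} gives $\ell(wr_a)<\ell(w)$. The case $wa\in\Phi^+$ is handled symmetrically by applying Proposition~\ref{pp:anu2} to $(wr_a,a)$: a hypothetical $\ell(w)\geq\ell(wr_a)$ would force $-wa\in\PLC(\Pi)$, contradicting (ii). To pin the length difference down to exactly $\pm1$, I would invoke the standard sign homomorphism $W\to\{\pm1\}$ sending every $s\in S$ to $-1$ (all defining relations hold trivially in $\{\pm1\}$, so the map is well defined), which forces $\ell(wr_a)$ and $\ell(w)$ to have opposite parities, whereas $|\ell(wr_a)-\ell(w)|\leq1$ always.

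For (iii), the forward direction is immediate: if $W$ is finite then $S\subseteq W$ is finite, $\Pi$ is finite, and $\Phi=\bigcup_{a\in\Pi}Wa$ is a finite union of finite orbits. For the converse, finiteness of $\Phi$ forces $\Pi$ and hence $S$ to be finite. The plan is to bound the length function uniformly by introducing the inversion set $N(w):=\{\beta\in\Phi^+\mid w^{-1}\beta\in\Phi^-\}$ and proving $|N(w)|=\ell(w)$ by induction on $\ell(w)$, with the inductive step using part~(i) to track exactly how right multiplication by a generator alters $N(w)$ via the classical symmetric-difference identity. Since $N(w)\subseteq\Phi^+$ is then uniformly bounded, $\ell$ is bounded on $W$; together with $|S|<\infty$ this forces $|W|<\infty$.

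The main technical obstacle is the inductive verification of $|N(w)|=\ell(w)$ in (iii), where signs have to be tracked carefully; every other step is a short, direct appeal to Proposition~\ref{pp:anu2}.
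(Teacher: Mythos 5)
The paper itself gives no proof of this proposition: it is quoted from Howlett's notes \cite{RB96} with only the remark that it ``may readily be deduced'' from Proposition~\ref{pp:anu2} and Theorem~\ref{co:anu2}, so there is no argument to compare against line by line. Your treatment of (ii) and (i) is precisely the deduction the paper gestures at, and it is correct: applying Proposition~\ref{pp:anu2} to the pairs $(w,a)$ and $(wr_a,a)$ (using $(wr_a)a=-wa$) yields both the exhaustion $\Phi=\Phi^+\cup\Phi^-$ and the strict length inequalities, disjointness follows from (C2), and the sign homomorphism together with $|\ell(wr_a)-\ell(w)|\leq 1$ pins the difference to exactly $\pm 1$. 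Part (iii) is the one place where you need genuinely more than Proposition~\ref{pp:anu2}, and your inversion-set plan ($\ell(w)=|N(w)|\leq|\Phi^+|$) is the standard correct route; the only point to flag is that the ``classical symmetric-difference identity'' rests on the sub-lemma that $r_a$ permutes $\Phi^+\setminus\{a\}$, and in this paper's setting $\Pi$ is only positively independent (condition (C2)), not necessarily linearly independent, so the usual coefficient-comparison proof of that sub-lemma needs a small supplement: if $\beta\in\Phi^+\setminus\{a\}$ had $r_a\beta\in\Phi^-$, then summing a $\PLC$-expression for $\beta$ (which must involve some $b\neq a$, since $(\beta,\beta)=1$ excludes $\beta$ being a positive multiple of $a$ other than $a$ itself) with one for $-r_a\beta$ would exhibit either $0\in\PLC(\Pi)$ or $a\in\PLC(\Pi\setminus\{a\})$, both of which are ruled out by the observations following Definition~\ref{def:datum}. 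With that detail supplied, your proof is complete.
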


Let $T=\bigcup_{w\in W} w S w^{-1}$, and we call it the set of \emph{reflections}
in $W$. Let $x$ be in $\Phi$. It follows that $x=wa$ for
some $w\in W$ and $a\in \Pi$. Direct calculations yield that $\rho_x
=(\phi_{\mathscr{C}} (w)) \rho_a (\phi_{\mathscr{C}} (w))^{-1}\in G_{\mathscr{C}}$.
Now let $r_x\in W$ such that $\phi_{\mathscr{C}}(r_x)=\rho_x$. Then $r_x = w r_a
w^{-1}\in T$ and we call it the \emph{abstract reflection} corresponding to $x$. It is readily
checked that $r_x =r_{-x}$ for all $x\in \Phi$ and $T=\{\,r_x \mid x\in
\Phi\,\}$. For each $t\in T$ we let $\alpha_t$ be the unique positive root with
the property that $r_{\alpha_t}=t$. It is also easily checked that there is a
bijection $T \leftrightarrow \Phi^+ $ given by $t \to \alpha_t $ ($t\in T$), and 
$x\to \phi_{\mathscr{C}}^{-1}(\rho_x)$ ($x\in \Phi^+ $). We call this bijection
the \emph{canonical bijection} between $T$ and $\Phi^+$.

%Define functions $N\colon W\to \mathcal{P}(\Phi^+)$ and
%$\overline{N}\colon W\to \mathcal{P}(T)$ (where $\mathcal{P}$ denotes power set)
%by setting
%$N(w)=\{\,x\in \Phi^+\mid wx\in \Phi^-\,\}$ and
%$\overline{N}(w)=\{\,t\in T \mid \ell(wt)<\ell(w)\,\}$ for all $w\in W$.
%Standard arguments as those used in \cite{HM} yield that for each $w\in W$,
%$\ell(w)=\#N(w)$ and $\overline{N}(w)=\{\,r_x\mid x\in N(w)\,\}$. In particular,
%$N(r_a)=\{a\}$ for each $a\in \Pi$. Furthermore,
%$\ell(wv^{-1})+\ell(v)=\ell(w)$, for some $w, v\in W$, if and only if
%$N(v)\subseteq N(w)$.

We close this preliminary section with the following well-known results on finite Coxeter groups:
\begin{lemma}
\label{lem:finite}
Let $\mathscr{C}=(V, \Pi, (\,,\,))$ be a Coxeter datum. Let $\Pi'\subseteq \Pi$, and let
$V_{\Pi'}$ be the subspace of $V$ spanned by $\Pi'$, and furthermore let $W_{\Pi'}:=\langle \{\,r_a\mid a\in \Pi'\,\}\rangle$.
Then $W_{\Pi'}$ is finite if and only if the restriction of the bilinear form $(\,,\,)$ to the subspace $V_{\Pi'}$ is positive definite.\qed 
\end{lemma}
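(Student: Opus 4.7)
The plan is to prove both directions separately, leveraging Proposition~\ref{pp:anu3}(iii), which characterises finiteness of a Coxeter group in terms of finiteness of its root system. First, I would observe that since $W$ is finitely generated (and thus $S$, hence $\Pi$, is finite), $\Pi'$ is necessarily finite and $V_{\Pi'}$ is finite-dimensional. The reflections $\{r_a \mid a \in \Pi'\}$ stabilise $V_{\Pi'}$ and act on it as an honest sub-Coxeter datum whose bilinear form is $(\,,\,)|_{V_{\Pi'}}$. In particular, the form is $W_{\Pi'}$-invariant.

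For the backward direction (positive definite $\Rightarrow$ finite), the key point is that $(V_{\Pi'}, (\,,\,)|_{V_{\Pi'}})$ is a finite-dimensional Euclidean space, and the $W_{\Pi'}$-action realises $W_{\Pi'}$ as a subgroup of the compact orthogonal group $O(V_{\Pi'})$. By (C1) every $a \in \Pi'$ satisfies $(a,a)=1$, and since $(\,,\,)$ is $W_{\Pi'}$-invariant, every element of the sub-root system $\Phi_{\Pi'} := W_{\Pi'}\cdot \Pi'$ is a unit vector, hence lies on the unit sphere of $V_{\Pi'}$. I would then show that $\Phi_{\Pi'}$ is a discrete subset of $V_{\Pi'}$: every positive root is a non-negative $\R$-linear combination of $\Pi'$, and positive definiteness together with the unit-norm constraint forces the coordinates to lie in a compact set on which only finitely many $\R$-combinations can have norm exactly $1$ and arise from the $W_{\Pi'}$-orbit. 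Since a discrete subset of the compact unit sphere is finite, $\Phi_{\Pi'}$ is finite, and Proposition~\ref{pp:anu3}(iii) then gives that $W_{\Pi'}$ is finite.

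For the forward direction (finite $\Rightarrow$ positive definite), I would use an averaging argument. Fix any positive definite inner product $B$ on the finite-dimensional space $V_{\Pi'}$ and define
\[
\psi(u,v) \;:=\; \sum_{w \in W_{\Pi'}} B(wu, wv).
\]
Then $\psi$ is a $W_{\Pi'}$-invariant positive definite symmetric bilinear form on $V_{\Pi'}$. Both $\psi$ and $(\,,\,)|_{V_{\Pi'}}$ are $W_{\Pi'}$-invariant, so decomposing $V_{\Pi'}$ into $\psi$-orthogonal irreducible $W_{\Pi'}$-subrepresentations $V_{\Pi'} = V_1 \oplus \cdots \oplus V_k$ and applying Schur's lemma yields that on each $V_i$ the form $(\,,\,)$ equals $c_i \cdot \psi|_{V_i}$ for some scalar $c_i \in \R$. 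For any $V_i$ containing a vector of the form $a \in \Pi'$ we get $c_i = (a,a)/\psi(a,a) > 0$; one then needs to check that every irreducible component $V_i$ meets the $W_{\Pi'}$-span of $\Pi'$, which is immediate since $\Pi'$ spans $V_{\Pi'}$. Hence $(\,,\,)|_{V_{\Pi'}}$ is a positive combination of positive definite forms and is therefore itself positive definite.

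The main obstacle will be establishing discreteness of $\Phi_{\Pi'}$ cleanly in the backward direction; the standard alternative is to appeal to the fact that any discrete subgroup of a compact Lie group is finite, but that shifts the burden onto showing $W_{\Pi'}$ acts discretely on $V_{\Pi'}$, typically via the Tits cone (which fills $V_{\Pi'}$ precisely in the positive definite case). The forward direction's only subtlety is ensuring every irreducible summand $V_i$ contains a root, which is handled by the fact that $\Pi'$ itself spans $V_{\Pi'}$.
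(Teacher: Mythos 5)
The paper states this lemma as well known and supplies no proof, so the benchmark is the classical argument (Bourbaki; Humphreys \S6.4), whose overall shape your proposal correctly reproduces: averaging plus Schur for ``finite $\Rightarrow$ positive definite'', and compactness of the orthogonal group for the converse. As written, however, both directions have genuine gaps. In the backward direction everything hinges on discreteness of $\Phi_{\Pi'}$ (equivalently, of $W_{\Pi'}$ in $\GL(V_{\Pi'})$), and your justification --- that ``only finitely many $\R$-combinations can have norm exactly $1$ and arise from the $W_{\Pi'}$-orbit'' --- is not an argument: the unit sphere of a positive definite space contains a continuum of non-negative combinations of $\Pi'$, so compactness of the sphere by itself gives nothing. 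You flag this yourself as ``the main obstacle'', but it is the entire content of that implication. The standard way to discharge it is the chamber argument: positive definiteness together with (C1) and (C2) forces $\Pi'$ to be linearly independent, hence the open cone $C=\{\,v\in V_{\Pi'}\mid (v,a)>0 \text{ for all } a\in\Pi'\,\}$ is nonempty, and $wC\cap C=\emptyset$ for $w\neq 1$ (via Proposition~\ref{pp:anu3}(i)); this makes $W_{\Pi'}$ a discrete subgroup of the compact group $O(V_{\Pi'})$, hence finite.

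In the forward direction the Schur step is also incomplete on two counts. First, the $\psi$-orthogonal decomposition into irreducibles need not be $(\,,\,)$-orthogonal when two summands are isomorphic as $W_{\Pi'}$-modules: an invariant symmetric form can then carry nonzero off-diagonal blocks, and positivity of each $c_i$ does not imply positive definiteness of the whole form. Second, the computation $c_i=(a,a)/\psi(a,a)$ requires a simple root $a$ lying \emph{inside} $V_i$; the fact that $\Pi'$ spans $V_{\Pi'}$ only guarantees that $V_i$ contains a component of some $a$, and $(a,a)=1$ gives no sign information about $c_i$ from that alone. Both issues are resolved in the classical proof by first splitting $\Pi'$ into irreducible Coxeter components (these are $(\,,\,)$-orthogonal by (C1) since $m_{st}=2$ across components, and the corresponding modules are pairwise non-isomorphic because they have different kernels), then showing that for a finite irreducible component the form is nondegenerate and the reflection module is irreducible, and only then applying Schur on each component. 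Your strategy is the right one, but these two steps need to be supplied before the proof stands.
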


\section{Transverse hyperplanes}
\begin{defn}\textup{(\cite{HLR11}, \cite{DHR13})}
\label{def: trans}
Given a Coxeter datum $\mathscr{C}=(V,\Pi,B)$, an affine hyperplane $V_1$ of codimension $1$ in $V$ is 
called \df{transverse} (to $\Phi^+$) if for each simple root $a\in \Pi$ the ray $\R_{>0}a$  intersects $V_1$ in
exactly one point, and this unique intersection is denoted by $\widehat{a}_{V_1}$. Given a hyperplane $V_1$ transverse to 
$\Phi^+$, let $V_0$ be the hyperplane that is parallel to $V_1$ and contains the origin.
\end{defn}

\begin{rmk}
For a Coxeter datum $\mathscr{C}$, it follows from Proposition~\ref{pp:anu3}~(ii) that it is 
always possible to find a hyperplane containing the origin that separates $\Phi^+$ and $\Phi^-$.
By suitably translating this hyperplane it is always possible to find a hyperplane transverse to  $\Phi^+$.
\end{rmk}

\begin{rmk}
\label{P}
Let $V_1$ be a transverse hyperplane and let $V_0$ be as in the preceding definition. 
Let $V_0^{+}$ be the open hall space induced by $V_0$ that contains $V_1$. Observe that 
$V_0^{+}$ contains  $\PLC(\Pi)$. Since 
$\Phi_{\mathscr{C}}^+\subset \PLC(\Pi)\subset V_0^{+}$, and $V_1$ is parallel to the boundary of 
$V_0^{+}$, it follows that $|\,V_1\cap \R_{>0} \beta\,| =1$ for each $\beta\in \Phi^+_{\mathscr{C}}$
(where $|A|$ of a set $A$ denotes the cardinality of $A$). 
\end{rmk}

The above remark leads to an alternative definition of transverse hyperplanes:

\begin{lemma}\textup{\cite{HLR11}}
\label{lem: trans}
Given a Coxeter datum $\mathscr{C}=(V,\Pi,B)$, an affine hyperplane $V_1$
is transverse if and only if $|\,V_1\cap \R_{>0} \beta\,| =1$ for each $\beta\in \Phi^+_{\mathscr{C}}$.
\qed
\end{lemma}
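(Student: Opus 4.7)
The plan is to prove the two directions separately. The \emph{if} direction is immediate: since $\Pi \subseteq \Phi^+$, the hypothesis that $|V_1 \cap \R_{>0}\beta|=1$ for every $\beta \in \Phi^+$ specializes to the defining transversality condition on the simple roots from Definition~\ref{def: trans}.

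For the \emph{only if} direction I would essentially formalize the argument already sketched in Remark~\ref{P}. The first step is to observe that $V_1$ cannot contain the origin: otherwise, for any simple root $a$, the ray $\R_{>0}a$ would meet $V_1$ either in the entire ray (if $a\in V_1$) or not at all (if $a\notin V_1$), contradicting the requirement that the intersection be a single point. Hence $V_0 \neq V_1$, and since the two are parallel, $V_1$ lies entirely in one of the two open half-spaces determined by $V_0$; call this half-space $V_0^+$.

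Next, I would show that $\PLC(\Pi) \subseteq V_0^+$. For each $a\in \Pi$, transversality gives a point $\widehat{a}_{V_1} = \lambda a \in V_1 \subseteq V_0^+$ with $\lambda>0$. Writing $V_0 = \ker \phi$ for a suitable linear functional $\phi$ with $\phi(V_0^+) \subseteq \R_{>0}$, we obtain $\phi(a) = \lambda^{-1}\phi(\widehat{a}_{V_1}) > 0$, so $a \in V_0^+$. Thus $\Pi\subseteq V_0^+$, and since $V_0^+$ is a convex cone (closed under positive linear combinations), $\PLC(\Pi) \subseteq V_0^+$.

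Finally, for any $\beta \in \Phi^+ \subseteq \PLC(\Pi)\subseteq V_0^+$, the ray $\R_{>0}\beta$ emanates from the origin (which lies on $V_0$) and proceeds into $V_0^+$; since $V_1$ is a hyperplane parallel to $V_0$ sitting at positive distance from $V_0$ in $V_0^+$, the ray meets $V_1$ in exactly one point. I anticipate no real obstacle: the argument is elementary linear geometry, and the only subtle point is the preliminary observation that transversality forces $V_1$ to miss the origin, which then makes the half-space dichotomy meaningful.
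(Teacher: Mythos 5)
Your proposal is correct and follows essentially the same route as the paper: the paper's own justification is exactly the content of Remark~\ref{P} (positive roots lie in $\PLC(\Pi)$, which sits in the open half-space $V_0^{+}$ bounded by the parallel translate $V_0$ of $V_1$ through the origin), together with the trivial converse from $\Pi\subseteq\Phi^+$. Your write-up merely fills in the details the remark leaves implicit, such as the observation that $0\notin V_1$.
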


\begin{defn}
\label{df: norm}
Let $\mathscr{C}=(V,\Pi,B)$ be a Coxeter datum, 
let  $V_1$ be a transverse hyperplane  in $V$, and let $V_0$ be obtained from $V_1$ as in Definition~\ref{def: trans}. 
%For each $v\in V\setminus V_0$, observe that the line $\R v$ intersects $V_1$ in exactly one point. 
\begin{enumerate}
%\item For each $v\in V\setminus V_0$,  the unique intersection point of $\R v$ 
%and the transverse hyperplane $V_1$  is called the \df{normalized root} and denoted $\widehat{v}$. The 
%\df{normalization map} is $\pi_{V_1}\colon V\setminus V_0\to V_1$, $\pi_{V_1}(v)=\widehat{v}$.
%Set $\widehat{\Phi}_{\mathscr{C}{V_1}}=\pi_{V_1}(\Phi_{\mathscr{C}})$.

\item For each $v\in V\setminus V_0$,  the unique intersection point of $\R v$ 
and the transverse hyperplane $V_1$  is denoted $\widehat{v}$. The 
\df{normalization map} is $\pi_{V_1}\colon V\setminus V_0\to V_1$, $\pi_{V_1}(v)=\widehat{v}$.
Set $\widehat{\Phi}_{\mathscr{C}{V_1}}=\pi_{V_1}(\Phi_{\mathscr{C}})$, and the elements 
$\widehat{x}\in\widehat{\Phi}_{\mathscr{C}{V_1}}$ are called \df{normalised roots}.

\item For $\widehat{x}\in \widehat{\Phi}_{\mathscr{C}V_1}$, set $x^+$ to be the 
unique element in $\Phi_{\mathscr{C}}^+$ with $\pi_{V_1}(x^+)=\widehat{x}_{V_1}$. 

\item Let $\varphi_{V_1}\colon V\to \R$ be the unique linear map satisfying the requirement that
$\varphi_{V_1}(v)=0$ for all $v\in V_0$, and $\varphi_{V_1}(v)=1$ for all $v\in V_1$.
\end{enumerate}
\end{defn}

Observe that $\pi_{V_1}(-x)= \pi_{V_1}(x)$ for all $x\in V$, and  $\widehat{y} =\frac{y}{\varphi_{V_1}(y)}$ for all $y\in V\setminus V_0$. 

For any $X\subseteq V\setminus V_0$, we set $\widehat{X}:=\{\,\widehat{x}\mid x\in X\,\}$. 

\begin{rmk}
\label{D}
Remark \ref{P} above implies that $\PLC(\Pi)\cap V_0=\emptyset$, and hence any subset of $\PLC(\Pi)$ can be normalised. 
\end{rmk}

It has been observed in \cite{HLR11} that the action of $W$ on $V$ induces, via the normalization map $\pi_{V_1}$,  a natural action
on the following region $D$ of $V_1$:
\begin{defn}
\label{dot}
\begin{align*}
D:&=(\bigcap_{w\in W} w(V\setminus V_0)) \cap V_1\\
  &= V_1\setminus \bigcup_{w\in W}w V_0.
\end{align*}   
As in \cite{HLR11} we define the $\cdot$-action on $D$ as follows: for $x\in D$, and $w\in W$, set
$$
w\cdot x=\widehat{wx}.
$$
\end{defn}
Hence if $x\in D$ then $wx\in V\setminus V_0$ for all $w\in W$. It is readily checked that the $\cdot$-action is a well defined
action of $W$, and that any $w\in W$ acts continuously on $D$ with respect to this $\cdot$-action. The set $D$ is the maximal subset of $V_1$ on which $W$ acts naturally.  
%Also observe that $\widehat{\Phi}_{\mathscr{C}{V_1}}\subseteq\conv(\widehat{\Pi})$, where 
%$\conv(X)$ denotes the convex hull of a set $X$, and $\widehat{\Pi}=\{\,\widehat{x}\mid x\in \Phi_{\mathscr{c}} \,\}$.
%If $\Pi$ in the Coxeter datum $\mathscr{C}=(\,V, \Pi, B\,)$ is a finite set (in which case the associated Coxeter 
%group $W$ is finitely generated), then we see that $\widehat{\Phi}_{\mathscr{C}{V_1}}$ is contained in the compact set
%$\conv(\widehat{\Pi})$.

\section{Davis complex}

In this section we briefly recall the construction of the Davis complex. The reader should consult \cite{Davis} (where it is referred to as $\Sigma$) for further background and details.
%
%
%The Davis complex is the simplicial complex given as the geometric realisation of the poset whose elements are cosets of spherical subsets $T$ ordered by inclusion. We give more details below.
%
%
%
%\begin{defn}
%Let $(W,S)$ be a Coxeter system with $S$ finite.
%A subset $T\subseteq S$ is calld \df{spherical} if $W_T$ is a finite. The
%\df{nerve} of $(W,S)$ is the simplicial complex having vertex set $S$ and
% such that $\emptyset\neq T\subset S$ spans a simplex if and only if  $T$ is spherical
%\end{defn}
A subset $T\subseteq S$ is called \df{spherical} if $W_T:=\langle r\mid r\in T \rangle $ is a finite.
We also call $\Pi'\subseteq \Pi$ \df{spherical} if $W_{\Pi'}$ is finite. 
Let $\cS$ denote the poset of all spherical subsets of $S$ ordered by inclusion.
The \df{fundamental chamber}, denoted \Kdavis, is the geometric realisation of the the poset $\cS$.
That is, there is a vertex in \Kdavis\ for each spherical subset of $S$, and vertices $v_1,\dots, v_k$ span a $(k-1)$-simplex in \Kdavis\ if the corresponding collection of spherical subgroups is totally ordered by inclusion. 
For  $s\in S$ let $\fc_s$ denote the union of those simplices of \fc\ that have $\{s\}$ as the minimal element.
Define an equivalence relation on the set $W\times \fc$ by setting
$$
(w,k)\sim(w',k') \quad \text{if} \quad k=k' \quad\text{and}\quad w^{-1}w'\in W_k, 
$$
where $W_k$ is the stabiliser of $k$. 

The \df{Davis complex} is the simplicial complex given as the quotient of $W\times \fc$ by the above equivalence relation.

%
%The \df{Davis complex} is the simplicial complex given as the geometric realisation of the poset whose elements are cosets of spherical subsets $T$ ordered by inclusion. A more detailed description is as follows.
%

\begin{example}
For the Coxeter group $W=\<s,t\|s=t^2=(st)^3=1\>$ we have $K$ and $\Sigma$ as shown below.
$$
\begin{tikzpicture}[scale=0.4,baseline=-20pt]
%\draw[help lines](0,-3)grid(10,3);

\def\rad{4}

%\fill[white!80!black] (-2,0) -- (2,0) -- (0,-2) -- (-2,0);

\coordinate [label=below:{$\{s,t \}$}] (O) at (0,0);
\coordinate [label=left:{$\{s \}$}] (S) at (120:\rad);
\coordinate  [label=right:{$\{t \}$}] (T) at (60:\rad);
\path [name path=vert] (0:0)--(90:\rad);
\path [name path=top] (120:\rad)--(60:\rad);

%\coordinate [name intersections={of=vert and top},n] (intersection-1);

\draw [name intersections={of=vert and top, by=E}]  
         [thick] (O) -- (E);
         
\draw (E) node [above] {$\emptyset$};

%\draw [name intersections={of=vert and top, by=x}] (O) {X} node(x) {Y};

\draw [thick] (O) -- (S) -- (T) -- (O);

\end{tikzpicture}
\qquad
\begin{tikzpicture}[scale=0.8,baseline=-20pt,thick]
%\draw[help lines](0,-3)grid(10,3);

\def\rad{3}

\path [name path=vert] (0:0)--(90:\rad);

\path [name path=top] (120:\rad)--(60:\rad);

\path [name intersections={of=vert and top, by=F}]  
         [thick] (0,0) -- (F);

\foreach \x / \y in {0/{},60/s,120/st,180/sts,240/ts,300/t}
{
\draw [rotate=\x,name path=side] (120:\rad)--(60:\rad);
\draw [rotate=\x] (0,0)--(60:\rad);

\path [rotate=\x, name path=spoke] (0,0)--(0,\rad);
\draw [name intersections={of=side and spoke}]  
         [thick] (0,0) -- (intersection-1);
\path [rotate=\x] (0,0)--(0,0.5*\rad) node {$\y K$};
}

\end{tikzpicture}
$$

\end{example}

\section{Imaginary cone}

%map1.tex 17/8/2016

For the rest of this paper, let $(W, S)$ be a Coxeter system with $|S|<\infty$, and let $(V, \Pi, (\,,\,))$ be a corresponding Coxeter datum, 
%in which $\Pi$ is a basis for $V$
and let $\Phi$ be the associated root system. 
%Throughout this section we assume that $W$ is irreducible.
%

For each $x\in V$, define $H_x=\{\,v\in V\mid (x, v)=0 \,\}$, the \df{orthogonal hyperplane} corresponding to  $x$.
%
%Following \cite{DHR13} we define
%define $\mathcal{K}=\{\, x\in \PLC(\Pi)\mid (x, a)\leq 0, \text{ for all $a\in \Pi$}\,\}$ and $K=\widehat{\mathcal{K}}$.
%
%Define $Q:=\{\,v\in V\mid (v, v)=0\,\}$, the \emph{isotropic cone} associated to the bilinear form $(\,,\,)$.
%
%We begin with a series of lemma needed to describe the map.
Following \cite{DHR13}, we set 
\begin{align*}
\mathcal{K}&=\{\, v\in \PLC(\Pi)\mid (v, a)\leq 0, \text{ for all $a\in \Pi$} \,\}.
\end{align*}
In particular, $\mathcal{K}\cap V_0=\emptyset$.

The next result was observed in \cite[Lemma 2.4]{DHR13}:
\begin{lemma}\label{nonempty}
Suppose that $W$ is an irreducible and infinite Coxeter group which is not affine. Then $\In(\mathcal{K})\neq \emptyset$ (where $\In$ denotes the topological interior).\qed
\end{lemma}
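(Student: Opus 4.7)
The plan is to exhibit a single vector $v^{*} \in V$ at which both defining conditions of $\mathcal{K}$ hold \emph{strictly}, and then invoke continuity. Concretely, I would look for $v^{*} = \sum_{s \in S} \lambda_s \alpha_s$ with every $\lambda_s > 0$ and $(v^{*}, \alpha_s) < 0$ for every $s \in S$. Assuming $\Pi$ is a basis of $V$ (the standard convention in this setting), the first condition places $v^{*}$ in the interior of $\PLC(\Pi)$ while the second is open as well, so any sufficiently small neighbourhood of $v^{*}$ will satisfy both simultaneously and therefore lie inside $\mathcal{K}$, giving a nonempty interior.

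To produce such a $v^{*}$ I would invoke the classical Vinberg / Perron--Frobenius trichotomy applied to the Gram matrix $B = ((\alpha_s, \alpha_t))_{s,t \in S}$. By (C1) this matrix has $1$'s on the diagonal and non-positive off-diagonal entries, and since $W$ is irreducible $B$ is indecomposable (no block-diagonal form after permutation of rows and columns). The trichotomy then asserts that exactly one of the following holds: (i) $B$ is positive definite, which by Lemma~\ref{lem:finite} forces $W$ to be finite; (ii) $B$ is positive semidefinite with one-dimensional kernel spanned by a strictly positive vector (the affine case); or (iii) there exists a strictly positive coefficient vector $\lambda = (\lambda_s)_{s \in S}$ for which $B\lambda$ is entry-wise strictly negative. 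The hypotheses of the lemma rule out (i) and (ii), so case (iii) applies, and $v^{*} := \sum_{s \in S} \lambda_s \alpha_s$ does the job.

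The main obstacle will be establishing the trichotomy itself---specifically, producing the strictly positive $\lambda$ in case (iii). The standard route chooses a scalar $c > 0$ so that $cI - B$ has entry-wise non-negative entries, applies Perron--Frobenius to this indecomposable non-negative matrix to obtain a strictly positive Perron eigenvector $\lambda$, and then analyses whether the Perron eigenvalue is less than, equal to, or greater than $c$, yielding the three cases respectively. The delicate step is pinning down the ``equal to'' sub-case as precisely the affine case, which relies on the classical characterisation of affine Coxeter groups by their Gram matrix being positive semidefinite of corank exactly $1$; this part can be cited from Bourbaki or taken directly from \cite[Lemma 2.4]{DHR13}.
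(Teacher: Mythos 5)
Your proposal is correct, but note that the paper does not actually prove this lemma: it is stated with a \qed and attributed outright to \cite[Lemma 2.4]{DHR13}, so there is no internal argument to compare against. What you give is a self-contained proof along the standard lines underlying that citation. The mechanics check out: with $A=cI-B$ entrywise non-negative and irreducible (irreducibility of $W$ gives connectivity of the off-diagonal support), the Perron eigenvector $\lambda>0$ satisfies $B\lambda=(c-\mu)\lambda$, and since $B$ is symmetric its least eigenvalue is $c-\mu$; ruling out $c-\mu>0$ (positive definite, hence $W$ finite by Lemma~\ref{lem:finite}) and $c-\mu=0$ (affine) leaves $(v^*,\alpha_s)=(c-\mu)\lambda_s<0$ for all $s$, and both defining conditions of $\mathcal{K}$ are open at $v^*$ once $\Pi$ is a basis, which is the paper's standing assumption from Section 6 onward. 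The one genuinely delicate point is the one you flag, the identification of the sub-case $c-\mu=0$ with ``affine,'' since for $m_{st}=\infty$ the entry $(\alpha_s,\alpha_t)$ is only required to be $\leq -1$ and the Gram matrix need not be the standard cosine matrix. This is easily closed: positive semidefiniteness forces every $2\times 2$ principal minor $1-(\alpha_s,\alpha_t)^2$ to be non-negative, so every $\infty$-bond must carry the value exactly $-1$, the Gram matrix is then the standard one, and the classical classification of positive semidefinite corank-one irreducible diagrams as the affine ones applies. With that observation made explicit, your argument is complete and arguably more informative than the paper's bare citation.
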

%\begin{proof}
%Lemma 2.4 of the second limit paper.
%\end{proof}
If $x\in \In(\mathcal{K})$, then it is clear form the definition that $(x, a)<0$ for all $a\in\Pi$, and in particular, we have
\begin{equation}
\label{eq:K}
(x, x)<0.
\end{equation}

\begin{defn}
The \df{imaginary cone} $\mathcal{Z}$ of $W$ in $V$ is defined by 
$$
\mathcal{Z}=\bigcup_{w\in W} w \mathcal{K}. 
$$
\end{defn}
It is clear from the above definition that $\mathcal{Z}$ is $W$-invariant. %and furthermore it can be easily deduced from the following lemma that $W\mathcal{Z}\subseteq \PLC(\Pi)$:

The following technical result was observed in \cite{Fu13}, and since it is used repeatedly in this paper, we include a proof here.
\begin{lemma}\label{4.5}%[Lemma 4.5 \cite[Lemma 4.5]{Fu13}]
Suppose that $v\in V$ has the property that $(v, a)\leq 0$ for all $a\in \Pi$. Then $wv-v\in \PLC(\Pi)\cup \{0\}$ for all $w\in W$.
%\qed
\end{lemma}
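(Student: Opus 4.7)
The plan is to prove this by induction on the length $\ell(w)$ of $w \in W$. The base case $w = 1$ is immediate since $v - v = 0$. For the inductive step, I would fix $w \in W$ with $\ell(w) \geq 1$ and write $w = w's$ with $s \in S$ and $\ell(w') = \ell(w) - 1$, with the aim of reducing the claim for $w$ to the claim for $w'$ (which is supplied by the inductive hypothesis).

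The main computation is then to apply the reflection formula to $sv$. Since $\phi_{\mathscr{C}}(s) = \rho_{\alpha_s}$, we have
\begin{equation*}
sv = v - 2(\alpha_s, v)\alpha_s,
\end{equation*}
hence
\begin{equation*}
wv - v = w'(sv) - v = w'v - 2(\alpha_s, v)\,w'\alpha_s - v = (w'v - v) + \bigl(-2(\alpha_s, v)\bigr)\,w'\alpha_s.
\end{equation*}
By the inductive hypothesis, $w'v - v \in \PLC(\Pi)\cup\{0\}$. The remaining term is controlled as follows: because $\ell(w's) = \ell(w') + 1$, Proposition~\ref{pp:anu3}(i) applied to $w'$ and $\alpha_s$ forces $w'\alpha_s \in \Phi^+ \subseteq \PLC(\Pi)$; and since the hypothesis on $v$ gives $(\alpha_s, v) \leq 0$, the scalar $-2(\alpha_s,v)$ is nonnegative. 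Therefore $-2(\alpha_s,v)\,w'\alpha_s$ is either zero (if $(\alpha_s,v) = 0$) or an element of $\PLC(\Pi)$.

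It remains to observe that $\PLC(\Pi)\cup\{0\}$ is closed under addition: the sum of two nonnegative $\Pi$-combinations is again a nonnegative $\Pi$-combination, and it is the zero vector only when all the coefficients vanish. Adding the two contributions above, we conclude $wv - v \in \PLC(\Pi)\cup\{0\}$, which closes the induction. The only non-bookkeeping step is the correct invocation of Proposition~\ref{pp:anu3}(i) to ensure $w'\alpha_s$ is positive; everything else is a direct manipulation of the reflection formula together with the sign hypothesis on $v$.
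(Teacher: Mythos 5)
Your proof is correct and follows essentially the same route as the paper's: induction on $\ell(w)$, the decomposition $w = w'r_{a}$ with $\ell(w) = \ell(w')+1$, the identity $wv - v = (w'v - v) - 2(\alpha_s, v)\,w'\alpha_s$, and Proposition~\ref{pp:anu3} to conclude $w'\alpha_s \in \Phi^+$. No gaps.
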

\begin{proof}
Use an induction on $\ell(w)$. If $\ell(w)=0$, then there is nothing to prove.
If $w\neq 1$, we may write $w=w' r_a$ where $a\in \Pi$ and $w'\in W$ with $\ell(w)=\ell(w')+1$.
Then Proposition~\ref{pp:anu3} gives us that $w' a\in \Phi^+$, and hence 
$$w v-v=(w' r_a)v-v =w'(v-2(v, a) a)-v=(w'v-v)-2(a, v)w'a.$$
By the inductive hypothesis, $w' v-v\in \PLC(\Pi)\cup \{0\}$. 
Since $(v, a)\leq 0$, it follows from $w'a\in \Phi^+$ that 
$wv-v\in \PLC(\Pi)\cup\{0\}$.
\end{proof}

\begin{rmk}
\label{K}
A direct consequence of Lemma \ref{4.5} and Remark~\ref{D} is that for all $w\in W$, 
$$w\mathcal{K}\subseteq \PLC(\Pi),$$ 
and hence $\mathcal{\Klin}$ is a subset of $D$, and the $\cdot$-action is defined on $\mathcal{K}$. This then implies that
the imaginary cone $\mathcal{Z}$ is a subset of $\PLC(\Pi)$. Therefore, we can normalise the imaginary cone $\mathcal{Z}$, 
and following the notation in \cite{DHR13} we set 
$$Z:=\widehat{\mathcal{Z}}.$$ 
Since $\mathcal{Z}$ is $W$-invariant, it follows that $Z\subseteq D\subseteq V_1$, and the $\cdot$-action is defined on
$Z$.  
\end{rmk}
%When $W$ is finitely generated, it is observed in \cite[Proposition 4.22][Definition 4.12]{Fu13} that the imaginary cone can be %alternatively characterized as:
%\begin{prop}
%If $W$ is finitely generated, then 
%$$
%\mathcal{Z}=\{v\in U^*\mid \text{ $(v, x)\leq 0$ for all but finitely many $x\in \Phi^+$}\}, 
%$$
%where $U^*$ denotes the algebraic dual of the \emph{Tits cone}.\qed
%\end{prop}

%It is well-known that $U^*$ is a cone, and hence it follows from the above proposition that $\mathcal{Z}$ is indeed a cone, 
%and in particular, $\mathcal{Z}$ is convex.

%The following technical result is easily established and it will be used a few times in this paper.  

%\begin{lemma}\label{4.5}[Lemma 4.5 \cite{Fu13}]
%Suppose that $v\in V$ has the property that $(v, a)\leq 0$ for all $a\in \Pi$. Then $wv-v\in \PLC(\Pi)$ for all $w\in W$.
%Moreover, if $v\in \PLC(\Pi)$, then $v\in U^*$, where $U$ is the \emph{Tits cone}.
%\qed
%\end{lemma}

\begin{lemma}\label{intersection}
Suppose that $W$ is a non-affine infinite Coxeter group, and suppose that 
$s_1, \ldots, s_n\in S$ such that $W':=\langle \{s_1, \ldots, s_n\}\rangle$ is a finite
standard parabolic subgroup of $W$. Then 
%$(H_{s_1}\cap \cdots \cap H_{s_n})\cap \mathcal{K}\neq \emptyset$.
$(H_{s_1}\cap \cdots \cap H_{s_n})\cap \Klin\neq \emptyset$.

In particular, if $v_0\in \In(\mathcal{K})$ then 
$$
\frac{1}{|W'|}\sum_{w\in W'} w v_0\in \mathcal{K} .
$$
\end{lemma}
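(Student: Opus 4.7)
The plan is to verify that the explicit element offered by the ``in particular'' clause,
\[
y := \frac{1}{|W'|}\sum_{w\in W'} w v_0,
\]
does the job. First I would invoke Lemma~\ref{nonempty} to pick a point $v_0\in\In(\mathcal{K})$, so that $(v_0,a)\leq 0$ for all $a\in\Pi$ and $v_0\in\PLC(\Pi)$. Because $W'$ is finite, averaging over $W'$ produces a fixed point: $w\cdot y=y$ for all $w\in W'$. Applied to each generator $s_i$, the reflection formula $s_i y = y - 2(\alpha_{s_i}, y)\alpha_{s_i}=y$ forces $(\alpha_{s_i}, y)=0$. Thus $y\in H_{s_1}\cap\dots\cap H_{s_n}$ for free, once membership in $\mathcal{K}$ is confirmed.

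The substantive step is showing $y\in\mathcal{K}$, i.e.\ that $y\in\PLC(\Pi)$ and $(y,a)\leq 0$ for every $a\in\Pi$. For the first, Lemma~\ref{4.5} gives $wv_0 - v_0\in\PLC(\Pi)\cup\{0\}$ for every $w\in W$; since $\PLC(\Pi)\cup\{0\}$ is a convex cone, the average satisfies $y-v_0\in\PLC(\Pi)\cup\{0\}$, and adding the genuine cone element $v_0$ places $y$ in $\PLC(\Pi)$.

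For the second, when $a=\alpha_{s_i}$ we already have $(y,a)=0$. When $a\in\Pi\setminus\{\alpha_{s_1},\dots,\alpha_{s_n}\}$, I would revisit the inductive proof of Lemma~\ref{4.5} \emph{restricted to $W'$}: because the reduction $w=w'r_{a'}$ with $w\in W'$ can always be taken with $a'\in\{\alpha_{s_1},\dots,\alpha_{s_n}\}$ and $w'\in W'$, the inductive step only ever introduces positive roots of $W'$, hence
\[
wv_0 - v_0 \in \PLC(\{\alpha_{s_1},\dots,\alpha_{s_n}\})\cup\{0\} \qquad \text{for all } w\in W'.
\]
Writing $wv_0 = v_0 + \sum_i c_i\alpha_{s_i}$ with $c_i\geq 0$ and pairing with $a$ gives $(wv_0,a) = (v_0,a) + \sum_i c_i(\alpha_{s_i},a)$. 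Both $(v_0,a)\leq 0$ (from $v_0\in\mathcal{K}$) and $(\alpha_{s_i},a)\leq 0$ (from axiom~(C1), as $a\neq\alpha_{s_i}$) hold, so every summand is nonpositive. Averaging over $W'$ yields $(y,a)\leq 0$, completing the verification that $y\in\mathcal{K}\cap H_{s_1}\cap\dots\cap H_{s_n}$.

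The main obstacle, and the only point that is not a mechanical averaging argument, is the refinement of Lemma~\ref{4.5} used in the last step: one must observe that when $w$ is restricted to the finite parabolic $W'$ the cone expansion of $wv_0-v_0$ uses only the simple roots $\alpha_{s_1},\dots,\alpha_{s_n}$, not all of $\Pi$. Once that is in hand, nothing more is needed beyond axiom~(C1) of the root basis and the fact that $v_0\in\mathcal{K}$.
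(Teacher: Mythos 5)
Your proof is correct and follows essentially the same route as the paper: verify the averaged element lies in $\mathcal{K}$ by combining the fixed-point computation for the $H_{s_i}$ with Lemma~\ref{4.5} and the sign conditions from (C1). The only difference is cosmetic: you explicitly isolate the refinement that for $w\in W'$ the cone expansion of $wv_0-v_0$ involves only $\alpha_{s_1},\dots,\alpha_{s_n}$, a point the paper uses implicitly when it writes $c-v_0=\sum_i\lambda_i\alpha_{s_i}+(|W'|-1)v_0$.
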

\begin{proof}
Lemma \ref{nonempty} above yields that $\In \mathcal{K}\neq \emptyset$. Pick $v_0\in \In \mathcal{K}$, and define
$c=\sum_{w\in W'} w v_0$. It then follows that for each $s_i$ ($i=1, \ldots, n$), we have
$s_i c=c$. Hence $(c, \alpha_{s_i})=0$, for all $i=1, \ldots, n$, whence $c\in H_{s_i}\cap\cdots \cap H_{s_n}$.
Observe that Lemma \ref{4.5} then yields that
$c-v_0 =\sum_{i}^{n} \lambda_i \alpha_{s_i}+(|W'|-1) v_0$,
where $\lambda_i \geq 0$ for all $i=1, \ldots, n$. Consequently, for all $t\in S\setminus\{s_1, \ldots, s_n\}$, 
\begin{align*}
(c, \alpha_t)&= ((|W'|-1)v_0+\sum_{i=1}^{n}\lambda_i \alpha_{s_i}, \alpha_t)\\
             &=(|W'|-1)(v_0, \alpha_t)+\sum_{i=1}^{n}\lambda_i(\alpha_{s_i}, \alpha_t)\\
						 &\leq (|W'|-1) (v_0, \alpha_t)\\
						 &<0,
\end{align*}
since $t\neq s_1, \ldots, s_n$ and $v_0\in \In \mathcal{K}$. 
Thus $(c, \alpha_s)\leq 0$ for all $s\in S$, and hence $c\in \mathcal{K}$.
It is readily checked that $\mathcal{K}$ is closed  under multiplication with a positive constant, 
consequently, 
$$c'=\frac{1}{|W'|}c=\frac{1}{|W'|}\sum_{w\in W'} w v_0\in \mathcal{K}.$$ 
Finally, $\widehat{c}\in (H_{s_1}\cap \cdots \cap H_{s_n})\cap \widehat{\mathcal{K}}$.
\end{proof}

\begin{lemma}\label{aff}
Suppose that $s_1, \ldots, s_n\in S$ such that $\langle s_1, \ldots, s_n\rangle$ is an (irreducible) affine 
reflection subgroup. Then $(H_{s_1}\cap \cdots \cap H_{s_n})\cap \widehat{\mathcal{K}}\neq \emptyset$
\end{lemma}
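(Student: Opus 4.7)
The plan is to exploit the classical structure of irreducible affine reflection subgroups: the Gram matrix of the simple roots of such a subgroup is positive semi-definite with a one-dimensional radical, and that radical is spanned by a vector with strictly positive coefficients. Such a vector will simultaneously lie in every orthogonal hyperplane $H_{s_i}$ and in $\mathcal{K}$.

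Write $\Pi' = \{\alpha_{s_1}, \ldots, \alpha_{s_n}\}$ and let $V_{\Pi'}$ denote its span. Since $W' = \langle s_1, \ldots, s_n\rangle$ is irreducible affine, the restriction of $(\,,\,)$ to $V_{\Pi'}$ is positive semi-definite of corank one, and the radical is spanned by a vector of the form $\delta = \sum_{i=1}^{n} c_i \alpha_{s_i}$ with all $c_i > 0$. (This is the standard classification of the Gram matrices of irreducible affine Coxeter systems; equivalently, the lowest root / null root of the associated affine root system has strictly positive simple root coefficients.) In particular $\delta \in \PLC(\Pi')\subseteq \PLC(\Pi)$ and $(\delta,\alpha_{s_i})=0$ for each $i$, so $\delta \in H_{s_1}\cap\cdots\cap H_{s_n}$.

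Next I would verify $\delta \in \mathcal{K}$. For each $i$, $(\delta, \alpha_{s_i}) = 0 \leq 0$. For each $t \in S \setminus \{s_1,\ldots,s_n\}$, condition (C1) of Definition~\ref{def:datum} gives $(\alpha_{s_i}, \alpha_t) \leq 0$ for every $i$, so
\[
(\delta, \alpha_t) = \sum_{i=1}^{n} c_i\,(\alpha_{s_i}, \alpha_t) \leq 0.
\]
Hence $(\delta, a)\leq 0$ for all $a \in \Pi$, and combined with $\delta \in \PLC(\Pi)$ this gives $\delta \in \mathcal{K}$.

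Finally, by Remark~\ref{D}, $\PLC(\Pi)\cap V_0 = \emptyset$, so $\delta \notin V_0$ and the normalised vector $\widehat{\delta}$ is well defined. Since $H_{s_i}$ is a linear hyperplane, $\widehat{\delta} \in H_{s_i}$ for all $i$, and thus $\widehat{\delta} \in (H_{s_1}\cap\cdots\cap H_{s_n})\cap \widehat{\mathcal{K}}$, completing the proof. The main obstacle to spell out carefully is step one — the existence of a strictly positive null vector for an irreducible affine Gram matrix — though in the Coxeter literature this is standard and is typically invoked by citation to the classification of affine diagrams or to a Perron--Frobenius type argument applied to the matrix $2I - G$, where $G$ is the Gram matrix.
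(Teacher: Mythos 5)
Your proposal is correct and follows essentially the same route as the paper: both take the strictly positive null vector $\delta=\sum c_i\alpha_{s_i}$ spanning the radical of the form restricted to the span of $\{\alpha_{s_1},\dots,\alpha_{s_n}\}$, check $(\delta,\alpha_{s_i})=0$ and $(\delta,\alpha_t)\leq 0$ for the remaining generators via (C1), and conclude membership in $\mathcal{K}$ and the hyperplanes. Your version is in fact slightly more careful than the paper's, since you explicitly justify that $\delta\notin V_0$ so that the normalisation $\widehat{\delta}$ is defined and still lies in each $H_{s_i}$.
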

\begin{proof}
If $\langle s_1, \ldots, s_n\rangle$ is an (irreducible) affine 
reflection subgroup then it is well known that $(\,,\,)$ restricted 
to the subspace spanned by $\alpha_{s_1}, \ldots, \alpha_{s_n}$ has 
a one dimensional radical spanned by an element $\alpha$ of the form
$\alpha:=\sum_{1=1}^{n} \lambda_i \alpha_{s_i}$ with $\lambda_i >0$ for all
$i=1, \cdots, n$. Then clearly 
$(\alpha, \alpha_{s_i})=0$ for all $i=1, \cdots, n$  and
$(\alpha, \alpha_t)\leq 0$ for all $t\in S\setminus\{s_1, \ldots, s_n\}$.
Consequently, $\alpha\in (H_{s_1}\cap \cdots \cap H_{s_n})\cap \widehat{\mathcal{K}}$.
\end{proof}

\begin{prop}\label{aff&fin}
Suppose that $s_1,\ldots, s_n\in S$. Then $(H_{s_1}\cap \cdots \cap H_{s_n})\cap \widehat{\mathcal{K}}\neq \emptyset$
if and only if either $\langle s_1, \ldots, s_n\rangle$ is finite or affine.
\end{prop}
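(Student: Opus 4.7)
The ``if'' direction is immediate from the two preceding lemmas: when $\langle s_1,\ldots,s_n\rangle$ is finite, Lemma~\ref{intersection} places the orbit-sum of a vector $v_0\in\In(\mathcal{K})$ inside $\mathcal{K}\cap H_{s_1}\cap\cdots\cap H_{s_n}$, and when $\langle s_1,\ldots,s_n\rangle$ is irreducible affine, the radical vector produced in the proof of Lemma~\ref{aff} already lies in $\widehat{\mathcal{K}}\cap H_{s_1}\cap\cdots\cap H_{s_n}$.

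For the converse, I would choose a lift $v\in\mathcal{K}$ of a point of the intersection and expand $v=\sum_{s\in S}c_s\alpha_s$ with $c_s\ge 0$. Splitting the sum defining $(v,\alpha_{s_j})=0$ according to whether $s\in\{s_1,\ldots,s_n\}$ gives
\[
\sum_{i=1}^n c_{s_i}(\alpha_{s_i},\alpha_{s_j}) \;=\; -\sum_{s\in S\setminus\{s_1,\ldots,s_n\}} c_s(\alpha_s,\alpha_{s_j}) \;\ge\; 0
\]
for every $j$, the inequality coming from (C1) together with $c_s\ge 0$. With $c=(c_{s_i})_{i=1}^n$ and $G=\bigl((\alpha_{s_i},\alpha_{s_j})\bigr)$, this is the coordinatewise statement $Gc\ge 0$. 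I would then decompose $\{s_1,\ldots,s_n\}$ into the connected components $S_1,\ldots,S_k$ of its Coxeter diagram; this block-diagonalises $G$ into blocks $G^{(\ell)}=I-N^{(\ell)}$ with each $N^{(\ell)}$ symmetric, non-negative, irreducible and of zero diagonal. On any component where $c^{(\ell)}\ne 0$, pairing $N^{(\ell)}c^{(\ell)}\le c^{(\ell)}$ against the strictly positive Perron--Frobenius eigenvector of $N^{(\ell)}$ forces $\rho(N^{(\ell)})\le 1$, so $G^{(\ell)}$ is positive semi-definite; by Lemma~\ref{lem:finite} combined with the standard classification of positive semi-definite Coxeter diagrams, $W_{S_\ell}$ is then finite or irreducible affine.

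The main obstacle is the residual subcase in which $c^{(\ell)}=0$ for some component $S_\ell$: here Perron--Frobenius is vacuous, and the orthogonality $(v,\alpha_{s_i})=0$ for $s_i\in S_\ell$ merely forces every $s\in S$ with $c_s>0$ to be non-adjacent to $S_\ell$ in the Coxeter diagram. To close the proof I would attempt to replace $v$ by a second element of $\mathcal{K}\cap H_{s_1}\cap\cdots\cap H_{s_n}$ whose coordinates on $S_\ell$ are strictly positive---for instance, by adding to $v$ a small multiple of a suitable $W_{S_\ell}$-average of a vector in $\In(\mathcal{K})$ in the spirit of Lemma~\ref{intersection}---with the delicate point being the preservation of the inequalities $(v+\varepsilon u,\alpha_s)\le 0$ at simple roots $\alpha_s$ adjacent to $S_\ell$, where the perturbation actively moves the pairing.
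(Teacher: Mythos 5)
Your ``if'' direction is exactly the paper's: the authors likewise dispose of it in one line by invoking Lemma~\ref{intersection} and Lemma~\ref{aff}. The divergence is in the converse, where the paper gives no argument at all beyond citing Proposition~4.8 of \cite{HLR11} (stated twice, as two formally distinct but logically identical ``cases''), whereas you attempt a self-contained proof. The portion of your argument that you actually carry out is sound: $Gc\geq 0$ does follow from $(v,\alpha_{s_j})=0$, $c_s\geq 0$ and the non-positivity of the off-diagonal form values guaranteed by (C1), and on any connected component of $\{s_1,\ldots,s_n\}$ on which $c^{(\ell)}\neq 0$ the Perron--Frobenius subinvariance argument does force $G^{(\ell)}$ to be positive semi-definite, hence the component to be of finite or affine type.

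However, the residual subcase you flag is a genuine and, as far as I can see, fatal gap. First, the proposed repair is circular: the $W_{S_\ell}$-average in the spirit of Lemma~\ref{intersection} only exists when $W_{S_\ell}$ is finite, which is precisely what is at stake. Second, no repair can succeed, because the converse fails in exactly this regime at the level of generality stated. Take $S=\{s_1,\ldots,s_5\}$ with $m_{s_is_{i+1}}=\infty$ and $(\alpha_i,\alpha_{i+1})=-2$ for $i=1,\ldots,4$, and $m_{s_is_j}=2$ otherwise; this is an irreducible Coxeter datum satisfying (C1) and (C2). Then $v=\alpha_4+\alpha_5$ has $(v,\alpha_1)=(v,\alpha_2)=0$, $(v,\alpha_3)=-2$, and $(v,\alpha_4)=(v,\alpha_5)=-1$, so $v\in\mathcal{K}$ and $\widehat{v}\in(H_{s_1}\cap H_{s_2})\cap\widehat{\mathcal{K}}$, yet $\langle s_1,s_2\rangle$ is infinite dihedral with $(\alpha_1,\alpha_2)=-2$, hence neither finite nor affine. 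So the statement as printed apparently needs an additional hypothesis (presumably carried by the precise formulation of \cite[Prop.~4.8]{HLR11}, which the paper leans on without restating); your method, unlike the paper's bare citation, has the merit of locating exactly where and why the argument breaks, but it cannot be completed as proposed.
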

\begin{proof}
Lemma \ref{intersection} and Lemma \ref{aff} cover the if part.

Conversely, suppose that $(H_{s_1}\cap \cdots \cap H_{s_n})\cap \widehat{\mathcal{K}}\neq \emptyset$, and
set $V'$ to be the subspace spanned by $\alpha_{s_1}, \ldots, \alpha_{s_n}$, and furthermore, let $V'^{\perp}$ be the
radical of $(\,,\,)$ restricted to $V'$. First, suppose that $\langle s_1, \ldots, s_n\rangle$ is infinite. Then by Proposition 4.8 of [the first limit paper], the fact that $(H_{s_1}\cap \cdots \cap H_{s_n})\cap \widehat{\mathcal{K}}\neq \emptyset$ implies that $\langle s_1, \ldots, s_n\rangle$
is affine. On the other hand, if $\langle s_1, \ldots, s_n\rangle$ is non-affine. Then by Proposition 4.8 of \cite{HLR11} 
the fact that $(H_{s_1}\cap \cdots \cap H_{s_n})\cap \widehat{\mathcal{K}}\neq \emptyset$ implies that $\langle s_1, \cdots, s_n\rangle$ is not  infinite.
\end{proof}

\begin{lemma}\label{stab}
Suppose that $v\in V$ satisfies the condition that $(v, a)\leq 0$ for all $a\in \Pi$, and
let $S':=\{\, s\in S\mid (v, \alpha_s)=0 \,\}$.
Then $W_v=\langle S'\rangle$, where $W_v$ is the stabiliser of $v$.
%Let $v\in V$ be such that $(v, a)\leq 0$ for all $a\in \Pi$, and
%let $S':=\{\, s\in S\mid (v, \alpha_s)=0 \,\}$.
%Then $\stab(v)=\langle S'\rangle$
\end{lemma}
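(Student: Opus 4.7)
The plan is to prove the two inclusions $\langle S'\rangle\subseteq W_v$ and $W_v\subseteq\langle S'\rangle$ separately. The first is immediate from the reflection formula: for each $s\in S'$ we have $(v,\alpha_s)=0$, so $sv=v-2(v,\alpha_s)\alpha_s=v$, and hence every generator of $\langle S'\rangle$ lies in $W_v$.

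For the reverse inclusion I would induct on $\ell(w)$ for $w\in W_v$. The base case $\ell(w)=0$ is trivial. For the inductive step, fix a reduced expression and write $w=w's$ with $\ell(w)=\ell(w')+1$; set $a=\alpha_s$. Exactly as in the computation in the proof of Lemma~\ref{4.5},
$$
wv-v=(w'v-v)-2(v,a)\,w'a.
$$
Since $wv=v$, this rearranges to $w'v-v=2(v,a)\,w'a$. Now Lemma~\ref{4.5} places $w'v-v\in\PLC(\Pi)\cup\{0\}$, while the length condition $\ell(w'r_a)>\ell(w')$ together with Proposition~\ref{pp:anu3} forces $w'a\in\Phi^+\subseteq\PLC(\Pi)$.

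The hypothesis $(v,a)\leq 0$ then splits into two cases. If $(v,a)<0$, then $2(v,a)\,w'a\in -\PLC(\Pi)$, so $w'v-v$ lies in both $\PLC(\Pi)\cup\{0\}$ and $-\PLC(\Pi)\cup\{0\}$. Axiom (C2) yields $\PLC(\Pi)\cap -\PLC(\Pi)=\emptyset$ (otherwise summing a positive combination with its negative would place $0$ in $\PLC(\Pi)$), so the common part is just $\{0\}$. Hence $w'v-v=0$, forcing $2(v,a)\,w'a=0$; but $(v,a)\neq 0$ and $w'a\neq 0$, a contradiction. Therefore $(v,a)=0$, which means $s\in S'$ and $w'v=v$. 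By induction $w'\in\langle S'\rangle$, and hence $w=w's\in\langle S'\rangle$.

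The only delicate point, and the thing I expect to pause on, is the disjointness $\PLC(\Pi)\cap -\PLC(\Pi)=\emptyset$; once that elementary consequence of (C2) is noted, the rest is a direct application of Lemma~\ref{4.5} combined with the standard fact $w'a\in\Phi^+$ when $\ell(w'r_a)>\ell(w')$. A pleasant side benefit of the argument is that it shows every reduced expression for an element of $W_v$ must end in a letter from $S'$.
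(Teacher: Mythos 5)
Your proof is correct and follows essentially the same route as the paper: the easy inclusion from the reflection formula, then induction on $\ell(w)$ using the decomposition $w=w'r_a$ with $\ell(w)=\ell(w')+1$, the identity $w'v-v=2(v,a)w'a$, and Lemma~\ref{4.5} together with $w'a\in\Phi^+$ to force $(v,a)=0$. The only difference is that you spell out the disjointness $\PLC(\Pi)\cap(-\PLC(\Pi))=\emptyset$ from (C2), a step the paper leaves implicit.
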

\begin{proof}
It is clear that $\langle S' \rangle \subseteq W_v$, since $(v, \alpha_s)=0$ for 
all $s\in S'$.

On the other hand, suppose that $w\in W_v$ with $w\neq 1$. We may write
$w= w' r_a$ where $a\in \Pi$ and $\ell(w)=\ell(w')+1$. Note that then $w'a\in \Phi^+$, and
$$v=w v = w' r_a v= w' v-2 (a, v) w' a.$$ That is, 
$w'v-v=2(a, v)w'a$
It follows from Lemma~\ref{4.5} that $(a, v)\geq 0$, forcing
$(a, v)=0$, and $r_a\in S'$. 
It then follows by an induction that $w\in \langle S' \rangle$.

\end{proof}

\begin{lemma}\label{K&Q}
Suppose that $x\in \mathcal{K}\cap Q$, and $x\neq 0$. Then there exists a subset $M$ of $\Pi$ such that
the restriction of the bilinear form $(\,,\,)$ to the subspace spanned by $M$ has 
a nonzero radical $\rad(M)$, and $x\in \rad(M)$.
\end{lemma}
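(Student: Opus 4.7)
The plan is to take $M$ to be the support of $x$ with respect to the simple roots and show directly that $M$ does the job. Write $x=\sum_{a\in\Pi}c_a\,a$ with $c_a\ge 0$ (possible since $x\in\mathcal{K}\subseteq\PLC(\Pi)\cup\{0\}$), and set
\[
M=\{\,a\in\Pi\mid c_a>0\,\}.
\]
Because $x\neq 0$, the set $M$ is nonempty and $x\in\vspan(M)$. This is the only reasonable choice of $M$: if we want $x\in\rad(M)$, then $M$ must span a subspace containing $x$, so including the support is forced (and including any further simple roots would only help).

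The key computation uses the defining properties of $\mathcal{K}\cap Q$. Since $x\in Q$, I interpret this as $(x,x)=0$ (the isotropic cone), and since $x\in\mathcal{K}$, one has $(x,a)\le 0$ for every $a\in\Pi$. Expand
\[
0=(x,x)=\sum_{a\in M}c_a\,(x,a).
\]
Every coefficient $c_a$ is strictly positive for $a\in M$, and every bracket $(x,a)$ is nonpositive, so each summand is $\le 0$. A sum of nonpositive reals equals zero only when every summand vanishes; hence $(x,a)=0$ for all $a\in M$.

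This immediately gives $x\in\rad(M)$, where $\rad(M)$ denotes the radical of the restriction of $(\,,\,)$ to $\vspan(M)$: for any $m=\sum_{a\in M}\mu_a\,a$ in $\vspan(M)$ we have $(x,m)=\sum_a\mu_a(x,a)=0$. Since $x\ne 0$, this also shows $\rad(M)\ne 0$, completing the proof.

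I do not expect a genuine obstacle here; the only subtle point is confirming that "$x\in\mathcal{K}\cap Q$" carries the hypothesis $(x,x)=0$ (so the single-line nonnegativity/nonpositivity argument applies) and noting that the support $M$ is automatically nonempty because $x\neq 0$. Everything else is a direct use of the sign conditions built into $\mathcal{K}$.
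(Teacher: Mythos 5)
Your proof is correct and uses essentially the same argument as the paper: expand $0=(x,x)=\sum_a \lambda_a(x,a)$ and use that each term is a product of a nonnegative coefficient with a nonpositive inner product to conclude every term vanishes. The only cosmetic difference is that you take $M$ to be the support of $x$ while the paper takes $M=\{a\in\Pi\mid (x,a)=0\}$ and then shows the support is contained in it; both choices yield the conclusion, and your reading of $Q$ as the isotropic cone matches the paper's (otherwise undefined) usage.
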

\begin{proof}
%Given that $x\in \mathcal{K}$, it follows that $(x, a)\leq 0$ for all $a\in \Pi$.
Let $M:=\{\, a\in \Pi\mid (x, a)=0\,\}$, and for each $a\in \Pi$, let $\lambda_a$ be the 
coefficient of $a$ in $x$. Thus $x=\sum_{a\in \Pi} \lambda_a a$. Note that each of $\lambda_a$ is 
non-negative, as $x\in \mathcal{K}$.

Now $0=(x, x)=\sum_{a\in \Pi}\lambda_a (x, a)$. Those terms corresponding to $a\in J$ are all zero.
For the others, $(x, a)$ is strictly negative. Since $(x, x)=0$, it follows that $\lambda_a=0$ for 
all those $a\in \Pi\setminus M$, and so $x$ is in the span of $M$. Since $(x, a)=0$ for all $a\in M$, we 
see that $x\in \rad(M)$.
\end{proof}

\begin{prop}\label{positive}
Let $u_0\in \In(\mathcal{K})$ be arbitrary, and let $\Pi'\subseteq \Pi$ be such that the standard parabolic subgroup $W_{\Pi'}:=\langle r_a\mid a\in \Pi'\rangle$ is
finite. Then the set 
$$W_{\Pi'} u_0:=\{\,w u_0\mid w\in W_{\Pi}\,\}$$
is positively independent, that is, $0\notin \PLC(W_{\Pi'} u_0)$. 
\end{prop}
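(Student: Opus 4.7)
The plan is to exploit the fact that the bilinear form $(\,,\,)$ restricted to $V_{\Pi'} := \vspan(\Pi')$ is positive definite, which follows from Lemma~\ref{lem:finite} since $W_{\Pi'}$ is finite. This yields an orthogonal direct sum decomposition $V = V_{\Pi'} \oplus V_{\Pi'}^{\perp}$ with respect to $(\,,\,)$. Moreover $V_{\Pi'}^{\perp}$ is fixed pointwise by each simple reflection $r_a$ with $a \in \Pi'$ (since $(a,v) = 0$ for $v \in V_{\Pi'}^{\perp}$ gives $r_a v = v - 2(a,v)a = v$), and hence is fixed pointwise by the whole subgroup $W_{\Pi'}$. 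Note also that each $r_a$ with $a \in \Pi'$ preserves $V_{\Pi'}$, so $W_{\Pi'}$ preserves $V_{\Pi'}$ as well.

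Write $u_0 = u' + u''$ according to this direct sum, so $u' \in V_{\Pi'}$, $u'' \in V_{\Pi'}^{\perp}$, and
\[
w u_0 = w u' + u'' \quad \text{with} \quad w u' \in V_{\Pi'}, \text{ for every } w \in W_{\Pi'}.
\]
The crucial observation is that $u'' \neq 0$: if $u_0$ were to lie entirely in $V_{\Pi'}$, then positive-definiteness of $(\,,\,)$ on $V_{\Pi'}$ together with $u_0 \neq 0$ would give $(u_0,u_0) > 0$, contradicting inequality~\eqref{eq:K}, which asserts $(u_0,u_0) < 0$ for $u_0 \in \In(\mathcal{K})$.

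Now suppose towards a contradiction that there exist coefficients $\mu_w \geq 0$, not all zero, with $\sum_{w \in W_{\Pi'}} \mu_w (w u_0) = 0$. Splitting this equation with respect to the direct sum $V = V_{\Pi'} \oplus V_{\Pi'}^{\perp}$ produces two independent identities
\[
\sum_{w \in W_{\Pi'}} \mu_w (w u') = 0 \quad \text{and} \quad \Bigl(\sum_{w \in W_{\Pi'}} \mu_w\Bigr) u'' = 0.
\]
Because $u'' \neq 0$, the second identity forces $\sum_{w \in W_{\Pi'}} \mu_w = 0$, and combined with $\mu_w \geq 0$ this forces every $\mu_w$ to vanish, contradicting the choice of the $\mu_w$. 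Therefore $0 \notin \PLC(W_{\Pi'} u_0)$, as required.

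The only potentially delicate point is the nonvanishing of the $V_{\Pi'}^{\perp}$-component $u''$, which rests on the tension between $(u_0,u_0) < 0$ on $\In(\mathcal{K})$ and positive-definiteness of $(\,,\,)$ on the finite-type subspace $V_{\Pi'}$. Once this is established, the remainder is a direct-sum bookkeeping argument driven purely by the fact that a sum of non-negative reals can vanish only if every term does.
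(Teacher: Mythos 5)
Your proof is correct, but it follows a genuinely different route from the paper. The paper's argument is combinatorial: it first invokes Lemma~\ref{stab} to see that the orbit elements are distinct, then assumes a positive dependence $u_0=\sum_{w\neq 1}\lambda_w\,wu_0$, isolates one term, and uses Lemma~\ref{4.5} (that $wu_0-u_0\in\PLC(\Pi)\cup\{0\}$) together with a case split on the size of the isolated coefficient to force $u_0$ into $\vspan(\Pi')$, at which point positive definiteness (Lemma~\ref{lem:finite}) clashes with $(u_0,u_0)<0$. You instead bypass Lemmas~\ref{4.5} and~\ref{stab} entirely: positive definiteness of $(\,,\,)$ on $V_{\Pi'}$ makes the restricted form non-degenerate, which is exactly what guarantees the orthogonal splitting $V=V_{\Pi'}\oplus V_{\Pi'}^{\perp}$ (valid even though the form may be degenerate on all of $V$ --- you might state this non-degeneracy point explicitly), and then the $W_{\Pi'}$-invariance of the two summands reduces everything to the observation that the $V_{\Pi'}^{\perp}$-component of any putative relation is $\bigl(\sum_w\mu_w\bigr)u''$ with $u''\neq 0$. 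Both proofs ultimately rest on the same tension between positive definiteness on the finite-type subspace and $(u_0,u_0)<0$ from~\eqref{eq:K}, but yours is cleaner and more self-contained linear algebra, and as a bonus it shows the stronger statement that the only linear relations $\sum_w\mu_w\,wu_0=0$ with real coefficients must have $\sum_w\mu_w=0$; the paper's version stays inside the $\PLC$ formalism that pervades the rest of the article.
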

\begin{proof}
First note that since $u_0\in \In(\mathcal{K})$, it follows that $(u_0, a)<0$ for all $a\in \Pi$. Then Lemma \ref{stab} above yields that the stabiliser of
$u_0$ is trivial, and consequently all elements in the set $W_{\Pi'} u_0$ are distinct from each other.  
Now suppose for a contradiction that $W_{\Pi'} u_0$ is not positively independent. Then without loss of generality, we may assume that the following holds:
$$u_0 =\sum_{w\in W_{\Pi'}\setminus\{1\}}\lambda_w w u_0, $$
where all the $\lambda_w \geq 0$. Pick an arbitrary $w_0\in W_{\Pi'}\setminus\{1\}$, we have 
\begin{equation}
\label{diff}
u_0-\lambda_{w_0} w_0 u_0 =\sum_{w\in W_{\Pi'}\setminus\{1, w_0\}} \lambda_w w u_0.
\end{equation}
Observe that by Lemma \ref{4.5} the right hand side of equation (\ref{diff}) is in $\PLC(\Pi)$. 
If $\lambda_{w_0}\geq 1$ then Lemma~\ref{4.5} says that the left hand side of equation~\ref{diff}
is a non-positive linear combination of $\Pi$, which is clearly absurd. 
If $\lambda_{w_0}<1$, then the fact that the right hand side of equation (\ref{diff}) is in $\PLC(\Pi)$
and Lemma~\ref{4.5} together force $u_0$ and $w_0 u_0$ to be scalar multiples of each other. But this
implies that $u_0$ lies in the vector subspace spanned by $\Pi'$, and this is clearly impossible because 
by Lemma~\ref{lem:finite} the restriction of the bilinear form $(\,,\,)$ on this subspace is positive definite and yet 
by (\ref{eq:K}) we have that $(u_0, u_0)<0$.
\end{proof}

Noting that $\pi_{V_1}(u)$ is strictly positive for each $u\in \In(\mathcal{K})$, and hence 
we may easily obtain the following:

\begin{cor}
Let $u_0\in \In(\mathcal{\Klin})$ be arbitrary, and let $\Pi'\subseteq \Pi$ be such that the standard parabolic subgroup $W_{\Pi'}$ is
finite. Then the set 
$$W_{\Pi'} \cdot u_0:=\{\,w \cdot u_0\mid w\in W_{\Pi}\,\}$$
is positively independent.\qed
\end{cor}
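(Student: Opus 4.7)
The plan is to reduce this corollary to Proposition~\ref{positive} by clearing the normalisation denominators that distinguish the $\cdot$-action from the linear $W$-action.

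First, I would identify $u_0 \in \In(\Klin) \subset V_1$ with a point of $\In(\mathcal{K})$. Since $\mathcal{K}$ is stable under positive scaling (already noted in the proof of Lemma~\ref{intersection}) and $\Klin = \widehat{\mathcal{K}}$, the unique element of $V_1 \cap \R_{>0}\,u_0$ lying in $\Klin$ is $u_0$ itself; hence $u_0 \in \mathcal{K}$, and moreover $u_0 \in \In(\mathcal{K})$ because the normalisation $\pi_{V_1}$ is a homeomorphism between $\mathcal{K}$ (minus $0$) and $\Klin$. Thus Proposition~\ref{positive} applies: $W_{\Pi'}\,u_0 = \{wu_0 \mid w \in W_{\Pi'}\}$ is positively independent.

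Next I would unpack the $\cdot$-action. By Remark~\ref{K}, for every $w\in W$ (in particular $w\in W_{\Pi'}$) we have $w u_0 \in \PLC(\Pi)$, and by Remark~\ref{P} this forces $\varphi_{V_1}(wu_0) > 0$. Consequently
\[
w \cdot u_0 \;=\; \widehat{wu_0} \;=\; \frac{wu_0}{\varphi_{V_1}(wu_0)}
\]
is a strictly positive scalar multiple of $wu_0$. Now suppose, for a contradiction, that $W_{\Pi'}\cdot u_0$ is \emph{not} positively independent, so there exist scalars $\mu_w \geq 0$, not all zero, with
\[
0 \;=\; \sum_{w\in W_{\Pi'}} \mu_w\,(w\cdot u_0)
\;=\; \sum_{w\in W_{\Pi'}} \frac{\mu_w}{\varphi_{V_1}(wu_0)}\,wu_0.
\]
Setting $\nu_w := \mu_w/\varphi_{V_1}(wu_0)$, each $\nu_w \geq 0$, not all zero, and $\sum_{w\in W_{\Pi'}} \nu_w\, wu_0 = 0$. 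This contradicts the positive independence of $W_{\Pi'}\,u_0$ established in Proposition~\ref{positive}.

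I do not expect any genuine obstacle here: the content lies entirely in Proposition~\ref{positive}, and the only thing to check is that the denominators $\varphi_{V_1}(wu_0)$ that appear when passing from the linear action to the $\cdot$-action are strictly positive, which is exactly Remarks~\ref{P} and~\ref{K}. The mildly delicate point is simply the identification $u_0 \in \In(\mathcal{K})$, which relies on $\mathcal{K}$ being a cone and on $\pi_{V_1}$ restricting to a homeomorphism between $\mathcal{K}\setminus\{0\}$ and $\Klin$.
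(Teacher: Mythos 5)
Your proof is correct and follows essentially the same route as the paper: the paper's entire justification is the one-line remark preceding the corollary that the normalising functional is strictly positive on $\In(\mathcal{K})$ (and hence on each $wu_0$), so that each $w\cdot u_0$ is a positive scalar multiple of $wu_0$ and positive independence transfers directly from Proposition~\ref{positive}. You have simply written out that reduction in full, including the minor identification of $u_0\in\In(\Klin)$ with a point of $\In(\mathcal{K})$, which the paper leaves implicit.
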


\section{Defining the map}
For this section, we assume that $\Pi$ is a basis for $V$, and we take the transverse hyperplane $V_1$ to be 
$$
V_1=\{\,v\in V\mid \sum_{a\in \Pi} v_a =1\,\}, 
$$
where the $v_a$'s are the coordinates of $v$ in the basis $\Pi$.
Consequently, 
$$
V_0=\{\,v\in V\mid \sum_{a\in \Pi} v_a =0\,\}, 
$$
and naturally, the normalization map $\pi_{V_1}$ is explicitly given by the rule:  for any $v\in V\setminus V_0$, 
$$
\pi_{V_1} (v)=\sum_{a\in \Pi} v_a.
$$

%%%%%%%%%%%%%%%%%%%%%%%%%%%%%%%%%%%%%%%%%
We begin by fixing a base point $v_0\in\In(\Klin)$. We then define a map $F:\Kdavis\to\Klin$ and  then extend to the whole Davis complex $\Sigma$.
Given a spherical subset $T\subseteq S$, denote by $u_T\in\Kdavis$ the corresponding vertex in the fundamental chamber of the Davis complex. The map will send $u_\emptyset$ to the base point $v_0$. 
%\begin{lemma}
%The set $A=\{(r_t\cdot v_0)-v_0 \| t\in T\}$ is linearly independent.
%\end{lemma}
As the image of $u_T$ we will choose the  element  $v_T\in\Zlin$
given by $$v_T=\frac{1}{|W_T|}\sum_{g\in W_T} g\cdot v_0$$ 
%
%Then $v_T$ is an element of the $|T|$-dimensional affine subspace $v_0+\< A\>$ and is fixed by all elements of $W_T$. 
Note that the same argument as that used in the proof of Lemma~\ref{intersection} enables us to deduce that $v_T\in\Klin$. Note also that $t\cdot v_T=v_T$ for all $t\in T$.
In particular if $t\in T$, then $v_T\in H_t$.  
	For $T\neq\emptyset$ we have  $v_T\in {\Klin}\cap(\cap_{t\in T}H_t)$ and therefore $v_T$ lies in the frontier of $\Klin$.

\begin{lemma}	\label{lem:chain}
Let $T:=\{\,t_1, t_2, \ldots, t_k\,\}$ be a spherical subset of $S$, and for each $i\in\{\,1, 2, \ldots, k\,\}$, define
$T_i:=\{\,t_1, t_2, \ldots, t_i\,\}$, that is, $T_1\subsetneq \dots\subsetneq T_k$ is a chain of spherical subsets of $S$.
%and set $v_{T_i}$ as in the above. 
%If $\{s\}=T_1\subsetneq \dots\subsetneq T_k$ is a chain of spherical subsets of $S$, 
Then the corresponding vertices $v_{T_1},\dots,v_{T_k}$ span a $(k-1)$-simplex in $H_s\cap{\Klin}$.
\end{lemma}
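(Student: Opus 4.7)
The strategy is to establish affine independence of the points $v_{T_1},\dots,v_{T_k}$ in $V_1$---equivalently, linear independence in $V$ of the displacement vectors $w_i:=v_{T_i}-v_{T_{i-1}}$ for $i=2,\dots,k$---by analysing the bilinear pairings $(v_{T_i},\alpha_{t_j})$ and extracting an upper-triangular structure. Containment in $H_s\cap\Klin$ will then follow from convexity of $\Klin$ together with the fact that each $v_{T_i}$ already lies in the relevant intersection of hyperplanes.

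The crucial input is the sign pattern of the pairings $(v_{T_i},\alpha_{t_j})$ for $1\leq i,j\leq k$. When $j\leq i$, so that $t_j\in T_i$, the element $v_{T_i}$ is fixed by $t_j$ (noted just before the lemma), giving $(v_{T_i},\alpha_{t_j})=0$. When $j>i$, so that $t_j\notin T_i$, I claim $(v_{T_i},\alpha_{t_j})<0$. To prove this, recall that $v_{T_i}$ is a positive scalar multiple of $\sum_{g\in W_{T_i}} g v_0$, so the sign of $(v_{T_i},\alpha_{t_j})$ matches the sign of $\sum_{g\in W_{T_i}}(v_0,g^{-1}\alpha_{t_j})$. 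For each $g\in W_{T_i}$, the standard length identity $\ell(g^{-1}t_j)=\ell(g^{-1})+1$ (valid whenever $t_j\notin T_i$) combined with Proposition~\ref{pp:anu3} yields $g^{-1}\alpha_{t_j}\in\Phi^+\subseteq\PLC(\Pi)$; since $v_0\in\In(\Klin)$ satisfies $(v_0,a)<0$ for every $a\in\Pi$, each summand is strictly negative, and the claim follows.

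Now form the $(k-1)\times(k-1)$ matrix $N_{ji}:=(w_i,\alpha_{t_j})$ with $i,j\in\{2,\dots,k\}$. If $j<i$ then $t_j\in T_{i-1}\subseteq T_i$, so both $(v_{T_i},\alpha_{t_j})$ and $(v_{T_{i-1}},\alpha_{t_j})$ vanish and $N_{ji}=0$. If $j=i$ then $(v_{T_i},\alpha_{t_i})=0$ while $(v_{T_{i-1}},\alpha_{t_i})<0$, so $N_{ii}=-(v_{T_{i-1}},\alpha_{t_i})>0$. Hence $N$ is upper triangular with strictly positive diagonal entries, is therefore invertible, and the vectors $w_2,\dots,w_k$ are linearly independent in $V$. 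This is precisely what it means for $v_{T_1},\dots,v_{T_k}$ to be affinely independent in $V_1$, and so they span a genuine $(k-1)$-simplex.

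For the containment assertion, every $v_{T_i}\in\Klin$ (established in the text immediately preceding the lemma) and every $v_{T_i}$ lies in $H_t$ for each $t\in T_1$, since $T_1\subseteq T_i$. Because $\Klin$ is convex (as the transverse slice of the convex cone $\mathcal K$) and each $H_t$ is a linear subspace, every convex combination of the $v_{T_i}$ remains in $\bigl(\bigcap_{t\in T_1}H_t\bigr)\cap\Klin$, giving the desired ambient location of the simplex. The main obstacle is the strict negativity $(v_{T_i},\alpha_{t_j})<0$ for $t_j\notin T_i$; once that positivity input is in hand, the triangular bookkeeping and the convexity argument are routine.
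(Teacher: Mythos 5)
Your proof is correct, but it takes a genuinely different route from the paper's. The paper argues by contradiction: it assumes $v_{T_k}=\sum_{i=1}^{k-1}\lambda_i v_{T_i}$ with $\sum\lambda_i=1$, isolates the term $t_k\cdot v_0$ (whose $\alpha_{t_k}$-coefficient is strictly positive because $(v_0,\alpha_{t_k})<0$), uses Lemma~\ref{4.5} and the linear independence of $\Pi$ to force $v_0$ into the span of $\alpha_{t_1},\dots,\alpha_{t_k}$, and then derives a contradiction from positive definiteness of the form on that span (Lemma~\ref{lem:finite}) against $(v_0,v_0)<0$; note this only shows $v_{T_k}$ is outside the affine span of the earlier vertices, with an implicit induction on $k$ supplying full affine independence. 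You instead prove affine independence directly by exhibiting the sign pattern $(v_{T_i},\alpha_{t_j})=0$ for $j\leq i$ and $(v_{T_i},\alpha_{t_j})<0$ for $j>i$, so that the consecutive differences pair against the $\alpha_{t_j}$ in a triangular matrix with nonzero diagonal. This avoids the positive-definiteness input entirely, handles all vertices at once, and as a bonus isolates exactly the fact ($v_T\in H_s$ iff $s\in T$) that the paper needs in the next lemma; the paper's route is shorter on the page but leans on a rather terse ``forces $v_0$ into the span'' step. Two small points to tidy in yours: $v_{T_i}$ is not literally a positive scalar multiple of $\sum_{g\in W_{T_i}}gv_0$, since the $\cdot$-action inserts normalisation factors $1/\varphi_{V_1}(gv_0)$; but these are all strictly positive (as $gv_0\in\PLC(\Pi)$), and since every summand $(v_0,g^{-1}\alpha_{t_j})$ is strictly negative the sign conclusion is unaffected. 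Also, with your indexing $N_{ji}$ ($j$ the row) the matrix is lower, not upper, triangular --- immaterial to invertibility.
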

\begin{proof}
It is enough to prove that $v_{T_k}$ can not be expressed in the form $\sum_{i=1}^{k-1}\lambda_i v_{T_{i}}$ where 
$\sum_{i=1}^{k-1}\lambda_i =1$. Suppose for a contradiction that for some $\lambda_1, \ldots,\lambda_k$ satisfying $\sum_{i=1}^{k-1}\lambda_i =1$ and we have
%\begin{align*}
%v_{T_k}=\frac{1}{|W_{T_k}|}\sum_{w\in W_{T_k}}w\cdot v_0&=\sum_{i=1}^{k-1}\lambda_i v_{T_i}=
%        \sum_{i=1}^{k-1}\lambda_i(\frac{1}{|W_{T_i}|} \sum_{w\in W_{T_i}} w\cdot v_0)\\
%				\frac{1}{|W_{T_k}|}(t_k\cdot v_0 +\sum_{w\in W_{T_k}\setminus\{t_k\}} w\cdot v_0)&=
%				\sum_{i=1}^{k-1}\lambda_i(\frac{1}{|W_{T_i}|} \sum_{w\in W_{T_i}} w\cdot v_0).
%\end{align*} 
\begin{equation}
\label{eq:v_k}
\frac{1}{|W_{T_k}|}\sum_{w\in W_{T_k}}w\cdot v_0=\sum_{i=1}^{k-1}\lambda_i(\frac{1}{|W_{T_i}|} \sum_{w\in W_{T_i}} w\cdot v_0).
\end{equation}
Noting that the left hand side of (\ref{eq:v_k}) can be expressed as
\begin{equation}
\label{eq:t}
\frac{1}{|W_{T_k}|}(t_k\cdot v_0 +\sum_{w\in W_{T_k}\setminus\{t_k\}}w\cdot v_0). 
\end{equation}
Given $v_0\in \In(\mathcal{K})$, we must have $t_k\cdot v_0 =\lambda v_0 + \mu \alpha_t$, where $\lambda$ and $\mu$
are strictly positive and $\alpha_t$ is the simple root corresponding to $t$.
Furthermore, Lemma~\ref{4.5} yields that all the terms $w\cdot v_0$ appearing in (\ref{eq:t}) are in $\PLC(\Pi)$.
Since here we assumed that $\Pi$ is linearly independent, and $t_k$ is not in any of the $W_{T_i}$ where $i\in \{1, 2, \ldots, k-1\}$, then (\ref{eq:v_k}) forces $v_0$ to be in the vector subspace
spanned by $\alpha_{t_1}, \ldots, \alpha_{t_k}$. But again as in Proposition~\ref{positive}, this is clearly impossible as the bilinear form $(\,,\,)$ is positive 
definite on this subspace (by Lemma~\ref{lem:finite}) and yet $(v_0, v_0)<0$.
%Given a spherical $\Pi'\subset \Pi$, 
%the set $A=\{(r_a\cdot v_0)-v_0 \| a\in \Pi'\}$ is linearly independent.
%Indeed, if on the contrary that $\sum_{a\in \Pi'}\lambda_a(r_a\cdot v_0 -v_0)=0$ and not all the coefficients $\lambda_a$ are zero, %then as in Proposition~\ref{positive}, $v_0$ will be forced to lie in the subspace spanned by $\Pi'$, but this is impossible by %Lemma~\ref{lem:finite}. 
%\textcolor{red}{(need to put in proof of this assertion and then rest of proof)}
\end{proof}

Each simplex of \Kdavis\ is given by such a chain of spherical subsets. Having chosen the map on vertices of \Kdavis\ to be $F(u_t)=v_T$ the above lemma allows us to extend to the whole simplex in \Kdavis\ spanned by the $u_{T_i}$.

\begin{lemma}
Let $k\in\Kdavis$ and $s\in S$. Then $k\in K_s$ if and only if $F(k)\in H_s$.
\end{lemma}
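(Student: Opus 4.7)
The plan is to pick out the unique open simplex of $\Kdavis$ containing $k$, express $F(k)$ as the corresponding convex combination of vertex images, and then evaluate $(F(k),\alpha_s)$ term by term.

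Let $k$ lie in the open simplex corresponding to the chain $T_1\subsetneq\cdots\subsetneq T_m$ of spherical subsets, so that $k=\sum_i\mu_i u_{T_i}$ with all $\mu_i>0$ and $\sum_i\mu_i=1$. By the affine extension set up via Lemma~\ref{lem:chain}, $F(k)=\sum_i\mu_i v_{T_i}$. Unpacking the definition of $K_s$, the condition $k\in K_s$ is equivalent to $k$ lying in a closed simplex whose chain has minimum element $\{s\}$; since that chain must contain $T_1\subsetneq\cdots\subsetneq T_m$ as a subchain, a short poset argument shows this amounts to $s\in T_1$ (if $s\in T_1$, either $T_1=\{s\}$ already, or else one prepends $\{s\}$; if $s\notin T_1$, no superchain containing $T_1$ can have minimum $\{s\}$).

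Now compute $(F(k),\alpha_s)=\sum_i\mu_i(v_{T_i},\alpha_s)$. The discussion preceding Lemma~\ref{lem:chain} gives $v_{T_i}\in H_s$, i.e.\ $(v_{T_i},\alpha_s)=0$, whenever $s\in T_i$; I claim conversely that $(v_{T_i},\alpha_s)<0$ whenever $s\notin T_i$. Granted this claim, both directions of the lemma fall out: if $s\in T_1$ then $s\in T_i$ for every $i$ (as $T_1\subseteq T_i$) and every summand vanishes, so $F(k)\in H_s$; if $s\notin T_1$ the $i=1$ summand is strictly negative while each of the remaining summands is $\leq 0$ (since $v_{T_i}\in\Klin$), so $(F(k),\alpha_s)<0$ and $F(k)\notin H_s$.

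The only real work is the strict inequality $(v_T,\alpha_s)<0$ for $s\notin T$. Since $v_T$ is a positive linear combination of the orbit points $\widehat{g v_0}$ for $g\in W_T$, and each $\widehat{g v_0}$ is a positive scalar multiple of $g v_0$, it suffices to prove $(g v_0,\alpha_s)<0$ for every $g\in W_T$. Writing $g v_0=v_0+(g v_0-v_0)$, the first summand contributes $(v_0,\alpha_s)<0$ because $v_0\in\In(\Klin)$. For the second, running the induction in the proof of Lemma~\ref{4.5} along a reduced expression for $g$ inside $W_T$ (so that every letter is an $r_a$ with $a\in T$) yields the sharper conclusion $g v_0-v_0\in\PLC(\{\alpha_t\mid t\in T\})\cup\{0\}$; since $(\alpha_t,\alpha_s)\leq 0$ for each $t\in T$ (by (C1), as $s\neq t$), this forces $(g v_0-v_0,\alpha_s)\leq 0$. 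Adding the two contributions gives $(g v_0,\alpha_s)<0$, and summing with positive weights yields $(v_T,\alpha_s)<0$, as required.
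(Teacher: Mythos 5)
Your proof is correct and follows essentially the same route as the paper's: reduce $k\in K_s$ to the condition $s\in T_1$ for the minimal chain carrying $k$, and then test membership of $F(k)=\sum_i\mu_i v_{T_i}$ in $H_s$ via the signs of the $(v_{T_i},\alpha_s)$. You additionally supply the one substantive point that the paper's one-line chain of equivalences leaves implicit, namely the strict inequality $(v_T,\alpha_s)<0$ for $s\notin T$ (needed for the direction $F(k)\in H_s\Rightarrow s\in T_1$), which you correctly obtain from the parabolic refinement of Lemma~\ref{4.5}.
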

\begin{proof}
%If $k\in(\Kdavis)_s$ then $F(k)\in H_s$ by Lemma~\ref{lem:chain}. For the converse, note that it follows from Lemma~\ref{lem:chain} that
%$s\cdot v_T=v_T$ if and only if $s\in T$.
Let $k\in \Kdavis$ and $s\in S$.
It follows from Lemma~\ref{lem:chain} that, for all $s\in S$ and spherical $T\subset S$, we have $s\cdot v_T=v_T$ if and only if $s\in T$.
If the simplex of $\Kdavis$ containing $k$ is given by $T_1\subsetneq \cdots\subsetneq T_n$, then we have
$$
k\in K_s \iff s\in T_1 \iff s\cdot v_{T_i}=v_{T_i} ~~\forall i \iff F(k)\in H_s
$$
\end{proof}

%the frontier of \Klin. 
%INTERSECTION OF TWO SIMPLICES

Each element $k\in\Kdavis$ that is not a vertex lies in the interior of a unique simplex of $\Kdavis$. 
Having defined an injective  map $F:\Kdavis\to{\Klin}$ with the property that 
$$
\stab_W(F(k))=W_{\{s\in S\| F(k)\in H_s\}}=W_{\{s\in S\| k\in K_s\}}
$$
we define a map $F:\Sigma\to \Zlin$ by 
$$
F((w,k))=w\cdot F(k)
$$

\begin{thm}
The map $F:\Sigma \to \Zlin$ is a $W$-equivariant embedding. 
\end{thm}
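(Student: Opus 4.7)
The strategy is to show, in turn, that $F$ is well-defined on the quotient $W\times\fc/{\sim}$, continuous, $W$-equivariant, and injective.  Well-definedness is quick: if $(w,k)\sim(w',k)$ then $w^{-1}w'\in W_k$, and the preceding lemma identifies this stabilizer with $\stab_W(F(k))$, forcing $w\cdot F(k)=w'\cdot F(k)$.  Equivariance follows at once from the formula, since $F(u(w,k))=F((uw,k))=uw\cdot F(k)=u\cdot F((w,k))$ for every $u\in W$.

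For continuity, $F|_{\fc}$ is affine, hence continuous, on each closed simplex of $\fc$, while each $w\in W$ acts continuously on $D\supseteq\Zlin$ via the $\cdot$-action.  When two chambers $w\fc$ and $w'\fc$ of $\Sigma$ meet along a common face, the equivalence relation forces the two candidate definitions of $F$ to coincide pointwise on the overlap, and a standard gluing argument then produces continuity on all of $\Sigma$.

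The main technical obstacle is injectivity, which I would establish in two stages.  Stage one shows $F|_{\fc}$ is injective: each closed simplex of $\fc$ corresponds to a chain $T_1\subsetneq\cdots\subsetneq T_n$ of spherical subsets, and Lemma \ref{lem:chain} guarantees that its image vertices $v_{T_1},\ldots,v_{T_n}$ are affinely independent, so $F$ is an affine embedding on each simplex.  Distinct vertices $u_T,u_{T'}$ of $\fc$ map to distinct points, since $\stab_W(v_T)=W_T$ determines $T$.  For points in distinct open simplices of $\fc$, one compares barycentric expansions of their images, expands each $v_T$ as $\tfrac{1}{|W_T|}\sum_{w\in W_T} w\cdot v_0$, and invokes the positive-independence statement Proposition \ref{positive} (applied to a spherical parabolic containing all relevant generators) to force the two chains and their barycentric expansions to coincide, so the two points are equal by the affine case.

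Stage two handles injectivity across $W$-translates.  Suppose $F((w,k))=F((w',k'))$; setting $g=(w')^{-1}w$, the definition of the $\cdot$-action gives $gF(k)=cF(k')$ for some scalar $c>0$.  Both $F(k),F(k')\in\Klin$ satisfy the hypothesis $(v,a)\leq 0$ for all $a\in\Pi$ of Lemma \ref{4.5}; applying that lemma to $F(k)$ with $g$ and to $F(k')$ with $g^{-1}$, then substituting $gF(k)=cF(k')$, shows that both $cF(k')-F(k)$ and $F(k)-cF(k')$ lie in $\PLC(\Pi)\cup\{0\}$.  Since $\Pi$ is a basis and $0\notin\PLC(\Pi)$, both differences vanish, so $F(k)=cF(k')$; the fact that $F(k),F(k')\in V_1$ then forces $c=1$.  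Hence $gF(k)=F(k)$, so $g\in\stab_W(F(k))=W_k$, and stage one gives $k=k'$, whence $(w,k)\sim(w',k')$ as required.  The genuine difficulty lies in the cross-simplex argument of stage one; once injectivity is secured, the combination of continuity and injectivity yields the desired $W$-equivariant embedding.
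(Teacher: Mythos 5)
Your overall route coincides with the paper's (extremely terse) proof: well-definedness via the identification $W_k=\stab_W(F(k))$ from the preceding lemma, equivariance read off from the formula, and injectivity reduced to the claim that $v\in\Klin$ and $w\cdot v\in\Klin$ force $w\in\stab_W(v)$. Your stage two is in fact a correct, fully written-out version of the paper's one-line appeal to Lemma~\ref{stab}: applying Lemma~\ref{4.5} to $F(k)$ with $g$ and to $F(k')$ with $g^{-1}$, concluding from $0\notin\PLC(\Pi)$ that $cF(k')-F(k)=0$, and then getting $c=1$ from $F(k),F(k')\in V_1$ is exactly the mechanism the paper is gesturing at. The continuity discussion, which the paper omits entirely, is also fine.

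The genuine gap is in your stage one, the injectivity of $F$ on $\fc$ across \emph{distinct} simplices. You propose to invoke Proposition~\ref{positive} ``applied to a spherical parabolic containing all relevant generators,'' but such a parabolic need not exist: the union of two spherical subsets is in general not spherical. Already for $S=\{s,t\}$ with $m_{st}=\infty$, the chains $\emptyset\subsetneq\{s\}$ and $\emptyset\subsetneq\{t\}$ give two edges of $\fc$ whose images are the segments $[v_0,v_{\{s\}}]$ and $[v_0,v_{\{t\}}]$, and ruling out an overlap beyond $v_0$ requires an independence statement for $\{v_0,\,s\cdot v_0,\,t\cdot v_0\}$ that Proposition~\ref{positive} does not supply, since $\langle s,t\rangle$ is infinite. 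Lemma~\ref{lem:chain} only treats a single chain, so some separate argument (for instance, comparing the sets $\{s\in S\mid F(x)\in H_s\}$ to pin down the chains inductively, or a direct linear-independence computation using $w\cdot v_0-v_0\in\PLC(\Pi_{T})$ for $w\in W_T$) is needed to finish the cross-simplex case. In fairness, the paper itself simply asserts that $F\colon\Kdavis\to\Klin$ is injective without proof, so you have correctly isolated the one point that neither your proposal nor the paper actually establishes.
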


\begin{proof}
It follows from the preceding lemma that this map is well-defined.
It is clearly $W$-equivariant. That it is injective follows from the observation that
if $k\in\Klin$ and $w\in W$ then $w\cdot k\in\Klin$ implies that $w\in\stab(k)$ (see Lemma~\ref{stab}).

\end{proof}

%PROPERTIES:
%Image of a wall in $\Sigma$ fixed by $wsw^{-1}$ is a subset of $w\cdot H_s$.

%We fix a base point $v\in\In(K)$ and define a map  $W\to Z$ by $w\mapsto w\cdot v$. Given $w\in W$ and $a\in\Pi$ connect $w\cdot v$ and $wr_a\cdot v$ with a line segment. This gives a map $\Cay(W,S)\to Z$. 

%The map $W\to Z$ given by $w\mapsto w\cdot v$

%\nocite{*}
%
%\printbibliography
\bibliographystyle{amsplain}

\providecommand{\bysame}{\leavevmode\hbox to3em{\hrulefill}\thinspace}
\providecommand{\MR}{\relax\ifhmode\unskip\space\fi MR }
% \MRhref is called by the amsart/book/proc definition of \MR.
\providecommand{\MRhref}[2]{%
  \href{http://www.ams.org/mathscinet-getitem?mr=#1}{#2}
}
\providecommand{\href}[2]{#2}

\end{document}